\newcommand{\CC}{\mathbb{C}}
\newcommand{\HH}{\mathbb{H}}
\newcommand{\RR}{\mathbb{R}}
\newcommand{\R}{\mathcal{R}}
\newcommand{\A}{\mathfrak{A}}
\newcommand\beqn{\begin{equation}}
\newcommand\neqn{\end{equation}}
\newcommand\beqnay{\begin{eqnarray}}
\newcommand\neqnay{\end{eqnarray}}
\newtheorem{teorema}{Theorem}[section]
\newtheorem{lema}[teorema]{Lemma}
\newtheorem{corolario}[teorema]{Corollary}
\DeclareMathOperator{\inter}{int}
\newcommand{\supp}[1]{\mbox{{\rm supp\/}}#1}
\newcommand{\spb}[1]{\smallskip}
\newcommand{\mpb}[1]{\medskip}
\newcommand{\bpb}[1]{\bigskip}
\newcommand{\p}{\partial}
\renewcommand{\a}{\alpha}
\renewcommand{\b}{\beta}
\newcommand{\al}{\alpha}
\newcommand{\be}{\beta}
\newcommand{\Ga}{\Gamma}
\newcommand{\de}{\delta}
\newcommand{\De}{\Delta}
\newcommand{\eps}{\varepsilon}
\renewcommand{\kappa}{\varkappa}
\renewcommand{\l}{\lambda}
\newcommand{\la}{\lambda}
\newcommand{\Om}{\Omega}
\newcommand{\wh}{\widehat}
\newcommand{\wt}{\widetilde}
\begin{document}

\title[
COMPUTATION OF CONFORMAL REPRESENTATIONS]
{\textsc{\footnotesize To appear in Mathematics of Computation}\\
\vskip1cm
{COMPUTATION OF CONFORMAL REPRESENTATIONS \\
OF COMPACT RIEMANN SURFACES}}

\author[G. L\'opez, D. Pestana, J. M. Rodr{\'\i}guez \and D. Yakubovich]
{Guillermo L\'opez Lagomasino$^{(1)}$, Domingo
Pestana$^{(1),(2)}$, Jos\'e M. Rodr{\'\i}guez$^{(1),(2)}$ \and
Dmitry Yakubovich$^{(3)}$}\thanks{\!\!\!\!\!\!\!\!$(1)\,\,\,$
Research partially supported by a grant from M.E.C. (MTM
2006-13000-C03-02) and a grant from U.C.III$\,$M./C.A.M.
(CCG07-UC3M/ESP-3339),
Spain.\\
$(2)\,\,\,$ Research partially supported by two grants from M.E.C.
(MTM 2006-11976 and MTM 2007-30904-E), Spain.\\
$(3)\,\,\,$ Research partially supported by the Grant
MTM2008-06621-C02-01, DGI-FEDER, of the Ministry of Science and
Innovation, Spain. }

\spb

\maketitle{}

\begin{abstract}
We find a system of two polynomial equations in two unknowns,
whose solution allows to give an explicit expression of the
conformal representation of a simply connected three sheeted
compact Riemann surface onto the extended complex plane. This
function appears in the description of the ratio asymptotic of
multiple orthogonal polynomials with respect to so called Nikishin
systems of two measures.
\end{abstract}

\

{\it Key words and phrases}: orthogonal polynomials, compact
Riemann surfaces, branched covering, nonlinear equations,
Newtonian continuation method.

\spb

{\rm 2000 AMS Subject Classification:
30F99 (primary), % Riemann surfaces
05E35, % Orthogonal polynomials
30C30, % Numerical methods in conformal mapping theory
58C15 % Implicit function theorems; global Newton methods
(secondary).}

\section{Introduction.}\label{sec:int}
Conformal representations of compact Riemann surfaces play an
essential role in approximation theory; in particular, they serve
to describe the so called ratio asymptotic of orthogonal
polynomials on the real line.

Let $\mu$ be a finite positive Borel measure whose compact support
$\supp(\mu)$ has infinitely many points and is contained in the
real line. By $q_n(w) = \kappa_nw^n+\cdots+\kappa_0$, with
$\kappa_n >0,$ we denote the $n$th orthonormal polynonomial with
respect to $\mu$; that is
\[ \int q_n(t) q_m(t) d\mu(t) = \delta_{n,m},
\]
where $\delta_{n,m}$ denotes the Kronecker delta.

It is well known and easy to verify that the sequence $\{q_n\}, n
\geq 0,$ satisfies a three term recurrence relation
\[  tq_n(t) = a_{n+1}q_{n+1}(t) + b_nq_n(t) + a_n q_{n-1}(t), \qquad
n \geq 1,
\]
$b_n \in {\mathbb{R}}, a_n >0$. The following result establishes a
close connection between ratio asymptotic of orthonormal
polynomials and the limit behavior of the recurrence coefficients
(see \cite{N}).
\begin{teorema} The following assertions are equivalent:
\begin{itemize}
\item $\displaystyle{\lim_n a_n = a > 0, \lim_n b_n = b}.$ \item
$\displaystyle{\lim_n \frac{q_{n+1}(w)}{q_n(w)} = \varphi(w) :=
\frac{w-b}{2a} + \sqrt{\left(\frac{w-b}{2a}\right)^2 -1}},$
uniformly on each compact subset of $\mathbb{C}\setminus
\supp(\mu)$.
\end{itemize}
If any of these conditions holds, then $\supp(\mu) = [b-2a,b+2a]
\cup e$, where $e$ is at most a denumerable set of isolated points
in $\overline{\mathbb{R}}\setminus [b-2a,b+2a].$
\end{teorema}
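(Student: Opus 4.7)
The plan is to treat the two implications separately using the three-term recurrence as the main tool, and then deduce the support description from standard spectral theory of Jacobi matrices.

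For $(1)\Rightarrow(2)$, I would invoke Poincar\'e's theorem on linear difference equations. Assume $a_n\to a>0$ and $b_n\to b$; the recurrence
\[
a_{n+1}q_{n+1}(w)=(w-b_n)q_n(w)-a_n q_{n-1}(w)
\]
has limiting characteristic equation $\lambda^{2}-\tfrac{w-b}{a}\lambda+1=0$, whose two roots are $\varphi(w)$ and $1/\varphi(w)$, satisfying $|\varphi(w)|>1$ precisely for $w\in\mathbb{C}\setminus[b-2a,b+2a]$. Poincar\'e's theorem then yields pointwise convergence of $q_{n+1}(w)/q_n(w)$ to one of the two roots, and the growth $q_{n+1}/q_n\sim(\kappa_{n+1}/\kappa_n)w$ at infinity selects the dominant root $\varphi(w)$. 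Combining local uniform boundedness of the ratios on compact subsets of $\mathbb{C}\setminus\supp(\mu)$ (obtained from classical estimates on the location of zeros and from the recurrence itself) with a normal family argument upgrades this to uniform convergence on compacts. The delicate point is that Poincar\'e's hypothesis of distinct moduli of the characteristic roots fails on $[b-2a,b+2a]$, not on all of $\supp(\mu)$, so one must argue separately that the possible additional isolated points of $\supp(\mu)$ outside this interval do not obstruct uniform convergence near them.

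For $(2)\Rightarrow(1)$ I would exploit the Laurent expansion at infinity. Writing $q_n(w)=\kappa_n w^{n}+\kappa_n' w^{n-1}+O(w^{n-2})$ and equating the coefficients of $w^{n+1}$ and $w^{n}$ in the recurrence yields the identities
\[
a_{n+1}=\frac{\kappa_n}{\kappa_{n+1}}\,,\qquad b_n=\frac{\kappa_n'}{\kappa_n}-\frac{\kappa_{n+1}'}{\kappa_{n+1}}.
\]
A direct computation gives
\[
\frac{q_{n+1}(w)}{q_n(w)}=\frac{\kappa_{n+1}}{\kappa_n}w+\frac{\kappa_{n+1}}{\kappa_n}\!\left(\frac{\kappa_{n+1}'}{\kappa_{n+1}}-\frac{\kappa_n'}{\kappa_n}\right)+O(1/w),
\]
while $\varphi(w)=w/a-b/a+O(1/w)$. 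Since $\supp(\mu)$ is compact, the circle $|w|=R$ lies in $\mathbb{C}\setminus\supp(\mu)$ for $R$ large, and Cauchy's formula applied on this circle transfers the assumed uniform convergence to convergence of the two leading Laurent coefficients, giving $\kappa_{n+1}/\kappa_n\to 1/a$ and $\kappa_{n+1}'/\kappa_{n+1}-\kappa_n'/\kappa_n\to -b$, hence $a_{n+1}\to a$ and $b_n\to b$.

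For the support statement, once $a_n\to a$ and $b_n\to b$, the Jacobi matrix $J$ on $\ell^{2}(\mathbb{N})$ with diagonal $(b_n)$ and off-diagonal $(a_n)$ is a compact perturbation of the constant-coefficient Jacobi matrix whose spectrum is $[b-2a,b+2a]$. By Weyl's invariance theorem for the essential spectrum, $\sigma_{\mathrm{ess}}(J)=[b-2a,b+2a]$, so $\sigma(J)$ can differ from this interval only by at most countably many isolated eigenvalues lying in $\overline{\mathbb{R}}\setminus[b-2a,b+2a]$. Since $\mu$ is (up to a constant) the spectral measure of $J$ associated to $e_{1}$, one has $\supp(\mu)=\sigma(J)$, whence the claimed description.
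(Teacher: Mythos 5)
The paper does not prove this statement: it is quoted in the introduction as a known result and attributed to Nevai's memoir (reference~\cite{N}), so there is no ``paper's own proof'' to compare against. Judged on its own terms, your sketch follows the standard architecture and is essentially sound: the $(2)\Rightarrow(1)$ direction via Cauchy's formula on a large circle and the identities $a_{n+1}=\kappa_n/\kappa_{n+1}$, $b_n=\kappa_n'/\kappa_n-\kappa_{n+1}'/\kappa_{n+1}$ is correct and is the same calculation Nevai uses; the description of $\supp(\mu)$ via Weyl's essential-spectrum theorem for the compact perturbation of the free Jacobi matrix is the standard and correct route. Your $(1)\Rightarrow(2)$ argument departs mildly from the usual treatment by invoking the Poincar\'e--Perron theorem for pointwise convergence of the ratios; Nevai instead works directly with normal families, passing to subsequential limits which then satisfy the limiting quadratic, and selects $\varphi$ by its behaviour at $\infty$. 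Both routes are legitimate, and the Poincar\'e--Perron angle gives a crisper reason for pointwise convergence at each $w$ off $[b-2a,b+2a]$; note that one can select the dominant root cleanly by observing that the Weyl ($\ell^2$) solution, not the polynomial solution $q_n$, is the exceptional one picking up the subdominant root.

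The one genuine gap is the step you flag yourself but do not resolve: establishing \emph{local uniform boundedness} of the sequence $q_{n+1}/q_n$ on compact subsets of $\CC\setminus\supp(\mu)$, which is what turns pointwise convergence into the claimed locally uniform convergence and what makes the normal-family argument run. Your parenthetical appeal to ``classical estimates on the location of zeros and the recurrence itself'' is too thin: the zeros of $q_n$ lie in the convex hull of $\supp(\mu)$ and, in each gap of the support, $q_n$ can place one zero anywhere in that gap, so the location of zeros alone does not preclude poles of $q_{n+1}/q_n$ from wandering through a fixed compact set in a gap. One standard way to close this is to use that $q_{n-1}/q_n$ (suitably normalized) is a Nevanlinna--Herglotz function, giving the bound $|q_{n-1}(w)/q_n(w)|\le C/\lvert\Im w\rvert$ on $\CC\setminus\RR$, then combine with the support description (which you should therefore establish \emph{before} proving $(1)\Rightarrow(2)$, since it controls where poles can sit) to push the bound to real points outside $\supp(\mu)$. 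As written, the order in your sketch has the support statement last, while the forward implication quietly needs it first; this is a presentational issue but worth fixing alongside the boundedness gap.
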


The function $z=\varphi(w)$ is the conformal representation of
$\overline{\mathbb{C}} \setminus [b-2a,b+2a]$ onto the complement
of the unit disk such that $\varphi(\infty) = \infty,
\varphi^{\prime}(\infty) > 0$ ($\overline{\CC}$ denotes the
extended complex plane). It is a solution of the algebraic
equation
\[ z^2 - \frac{w-b}{a} z +1 =0,
\]
which lives on a two sheeted compact Riemann surface. The numbers
$a$ and $b$ are determined by the first two coefficients of the
Laurent expansion of $\varphi$ at $\infty;$ in fact, $\varphi(w) =
w/a - b/a + {\mathcal{O}}(1/w), w \to \infty.$ In turn, the
algebraic equation which characterizes $\varphi$ is easy to
determine if the values $a,b$ are given. Any measure for which
$\supp(\mu) = [b-2a,b+2a] \cup e$, where $e$ is as described in
the theorem, verifies this ratio asymptotic if $\mu^{\prime}
> 0$ a.e. on $[b-2a,b+2a].$ This is known as the Rakhmanov-Denisov
theorem (see \cite{D} and \cite{R}).

Recently, (see \cite{ALR}, \cite{AKLR},  and \cite{LL}) results
analogous to those stated above and the Denisov-Rakhmanov theorem
were obtained for multiple orthogonal polynomials of Nikishin
systems of $m$ measures. In this case, the ratio asymptotic is
described in terms of a conformal representation of an $m+1$
sheeted compact Riemann surface onto the extended complex plane.
The reader interested in these results can check the references
given. We will not dwell into details and introduce directly the
relevant Riemann surface.

Let $\Delta_k, k=1,\ldots,m,$ be a system of bounded intervals of
the real line such that $\Delta_k \cap \Delta_{k+1} = \emptyset, k
=1,\ldots,m-1$. Consider the $(m+1)$-sheeted Riemann surface
$$
{\mathcal{R}}=\overline{\bigcup_{k=0}^m {\mathcal{ R}}_k} ,
$$
formed by the consecutively ``glued'' sheets
$$
{\mathcal {R}}_0:=\overline {\mathbb{C}} \setminus
{\Delta}_1,\quad {\mathcal R}_k:=\overline {\mathbb{C}} \setminus
( {\Delta}_k \cup
 {\Delta}_{k+1}),\,\, k=1,\dots,m-1,\quad {\mathcal R}_m=\overline
{\mathbb{C}} \setminus  {\Delta}_m,
$$
where the upper and lower banks of the slits of two neighboring
sheets are identified. In fact, this describes $\R$ as a branched
cover of $\overline{\CC}$. Denote by $\pi$ the corresponding
projection. Notice that $\R$ is a compact Riemann surface of genus
$0$; therefore, it is conformally equivalent to
$\overline{\mathbb{C}}$. Fix $l \in \{1,\ldots,m\}$. We are
interested in the conformal representation $\psi^{(l)}$,
 of $\mathcal{R}$ onto $\overline{\mathbb{C}}$ such that
\begin{equation} \label{eq:psi}
\psi^{(l)}(w) = w + {\mathcal{O}}(1)\,,\,\, w \to \infty^{(l)},
\quad \psi^{(l)}(w) = C_1/w + {\mathcal{O}}(1/w^2)\,,\,\, w \to
\infty^{(0)},
\end{equation}
where $w^{(l)}$ denotes the point in the sheet $l$ which projects
onto $w\in \overline{\mathbb{C}}$. When $m=1$ and $\Delta_1 =
[b-2a,b+2a],$ $\varphi=\frac 1a \psi^{(1)}|\R^{(1)}$.

In view of our motivation coming from the ratio asymptotic of
multiple orthogonal polynomials of Nikishin systems of measures,
we would like to find an explicit expression for $\psi^{(l)}$ or,
at least, an algebraic equation characterizing it, in terms of the
end points of the intervals $\Delta_k$. This problem seems to be
very difficult and unsolvable in all its generality, because it
requires the solution of high order algebraic equations. For this
reason, in the main part of the paper, we will restrict our
attention to the case $m=2$, where we have been able to achieve
our goals (see Theorem 3.1 below).

There is another way to look at the Riemann surface $\R$. If
$\psi:\R\to \overline{\CC}$ is any conformal representation and
$g:\overline{\CC}\to \R$ is its inverse, then
\begin{equation} \label{pi-circ-g}
G:=\pi\circ g
\end{equation}
is a holomorphic function from the extended Riemann sphere onto
itself and hence it is a rational function. Its degree (understood
as the number of solutions of the equation $G(z)=w$ for a generic
$w$) equals the number of sheets of the branched cover $\pi:\R\to
\overline{\CC}$, that is $m+1$. The end points of the intervals
$\Delta_k$ determine, in fact, \textit{the values of $G$ at its
critical points, where $G'(z)=0$. Therefore, we are interested in
determining a rational function $G$ of a given degree by its
critical values and, possibly, some additional topological
characteristics of the branched cover it defines.}

We remark that a related problem of classifying rational functions
with real critical values has been studied recently by various
authors, see  \cite{ShV} and references therein. It also turns out
that our problem is related to a problem of determining the
coefficients of a special Schwarz--Christoffel mapping of the
upper half-plane onto a special Riemann surface, see
\S\ref{computations} below.

The paper is divided as follows. In Section 2, we describe some
general properties of the conformal representations of
$\mathcal{R}$ onto $\overline{\mathbb{C}}.$ Sections 3 contains
the statement of the main results of the paper which are obtained
for the case when $m=2.$ Their proofs are carried out in Sections
4 and 5. The final Section 6 is devoted to the development of a
numerical algorithm for solving the system {{\bf (Syst)}} which in
turn allows one to calculate the functions $\psi^{(1)},
\psi^{(2)}$ numerically (for $m=2$).

\

{\bf Acknowledgements.} We would like to thank the referee for
carefully reading the manuscript and for some helpful suggestions.
We also express our gratitude to Daniel Est\'evez S\'anchez, who
helped us with the numerical calculations, using the MatLab
system. His work was subsidized by a Grant for the Support to
Study by the Autonomic Community of Madrid (2008).

\section{Some general properties}

Let $\psi, \Psi$ be any two conformal representations of
$\mathcal{R}$ onto $\overline{\mathbb{C}}$, then $\Psi \circ
\psi^{-1}$ is an automorphism of $\overline{\mathbb{C}}$.
Consequently, there exist constants $a,b,c,d,$ with $ad -cb \neq
0,$ such that
\[ \Psi(w) = \frac{a\psi (w)+b}{c\psi(w) + d}\,.
\]
Therefore, if we have found a conformal representation $\psi$, any
other $\Psi$ can be expressed explicitly in terms of $\psi$ and
the values of $\Psi$ at three distinct points (alternatively, in
terms of the values of $\Psi$ at two points and its first
derivative at one of the two points). For example (see
(\ref{eq:psi})),
\[ \psi^{(k)}(w) = C_2 \psi^{(l)}(w)/(\psi^{(l)}(w) -\psi^{(l)}(\infty^{(k)})),
\qquad k,l \in \{1,\ldots,m\}, \qquad k \neq l,
\]
where $C_2 \neq 0$ is an appropriate constant so that the Laurent
expansion of the right hand side at $\infty^{(k)}$ has leading
coefficient equal to $1$.

Let $\psi^{(l)}_{k}, k=0,1,\ldots,m,$ denote the branches of
$\psi^{(l)}$, corresponding to the different sheets of
$\mathcal{R}$. The function $\psi^{(l)}$ is the solution of the
algebraic equation
\begin{equation} \label{eq:algebraica} \prod_{k=0}^m (z - \psi^{(l)}_{k}(w)) = z^{m+1} +
\sum_{j=0}^{m} \alpha_j(w) z^{j} = 0.
\end{equation}
The coefficients $\alpha_j(w)$ are the so called symmetric
functions which are known to be entire functions on the complex
plane and can be expressed in terms of the branches of
$\psi^{(l)}$ through the Vieta relations.

By the definition of $\psi^{(l)}$, $\alpha_0(w) =
(-1)^{m+1}\prod_{k=0}^{m+1} \psi_k^{(l)}(w)$ has no singularity on
$\overline{\mathbb{C}}$; therefore, according to Liouville's
theorem it is constant. On the other hand, each $\alpha_j(w),
j=1,\ldots,m,$ has a simple pole at $\infty$; consequently, it is
a polynomial of first degree. In (\ref{eq:algebraica}), we can
solve for $w$ and from what was said before we find that $w = g(z)
= p(z)/q(z) = (\psi^{(l)})^{-1}(z),$ where $p$ is a polynomial of
degree $m+1$ and $q$ is a polynomial of degree $m$. The algebraic
equation which defines $\mathcal{R}$ is irreducible, therefore,
$(p,q) \equiv 1$; otherwise, $\psi^{(l)}$ would satisfy a lower
order algebraic equation in $z$. The poles of $g$ (which are
$\infty$ and the zeros of $q$) are the points
$\psi^{(l)}(\infty^{(k)}), k= 0,\ldots,m.$ The zeros of $g$ (which
are  the zeros of $p$) are the points $\psi^{(l)}(0^{(k)}), k=
0,\ldots,m$ (without loss of generality we can assume that $0
\not\in \cup_{k=0}^m \Delta_k).$ Since $\psi^{(l)}$ is single
valued, all these points are distinct. In particular, the zeros
and poles of $g$ are simple.

It is easy to verify that
\begin{equation} \label{eq:simetria} \psi^{(l)}(w) =
\overline{\psi^{(l)}(\overline{w})}, \qquad w \in \mathcal{R}.
\end{equation}
Indeed, let $\phi(w) := \overline{\psi^{(l)}(\overline{w})}$.
$\phi$ and $\psi^{(l)}$ have the same divisor; consequently, there
exists a constant $C$ such that $\phi= C\psi^{(l)}$. Comparing the
leading coefficients of the Laurent expansion of these functions
at $\infty^{(l)}$,  we conclude that $C=1.$

In terms of the branches of $\psi^{(l)}$, the symmetry formula
(\ref{eq:simetria}) indicates that  for each  $k= 0,1,\ldots,m,$
\begin{equation} \label{eq:sim1}
\psi^{(l)}_k: \overline{\mathbb{R}} \setminus ( {\Delta}_k\cup
{\Delta}_{k+1}) \longrightarrow \overline{\mathbb{R}}
\end{equation}
$( {\Delta}_0 =  {\Delta}_{m+1}=\emptyset)$; therefore, the
coefficients (in particular, the leading one) of the Laurent
expansions at $\infty$ of the branches of $\psi^{(l)}$ are real
numbers, and
\begin{equation}  \label{eq:sim2}
\psi^{(l)}_k(t_{\pm}) = \overline{\psi^{(l)}_k(t_{\mp})} =
\overline{\psi^{(l)}_{k+1}(t_{\pm})}, \qquad t \in  {\Delta}_{k+1}
\end{equation}
(the second equality is due to continuity).

\section{The results for $m=2$.}

Here we consider the problem of explicitly determining the
functions $G$ and $\psi^{(l)}$ for the case of two intervals
$\Delta_1, \Delta_2$. Using an affine transformation, if
necessary, we can assume that $\Delta_1 = [-\mu,-1]$ and $\Delta_2
= [1,\lambda]$, where $\lambda, \mu > 1$.

\begin{teorema}
\label{tnosimetrico} The functions $\psi^{(1)}$ and $\psi^{(2)}$
can be computed as follows:
$$
\psi^{(1)}=\frac {1}{H(a)}\,(1+G^{-1}) \,, \qquad \qquad
\psi^{(2)}=\frac{  A}{2H(a)}\, \frac{1+G^{-1}}{1-G^{-1}} \;,
$$
where $G(z):=H(z)/H(a)$,
\begin{equation}
\label{def H(z)} H(z) = h + z +\frac{Az}{1-z}+\frac {Bz}{1+z}\,,
\end{equation}
\begin{equation}
\label{def h} h=\frac14\,(a+\alpha)\Big(2a\alpha- \frac
{(a-\alpha)^2}{1-a\alpha}\Big) ,
\end{equation}
\begin{equation}
\label{def A,B} A=\frac14\,(1-\beta)(1-\alpha)(1-a)(1-b)\,, \qquad
B=\frac14\,(1+\beta)(1+\alpha)(1+a)(1+b)\,,
\end{equation}
$\beta$ and $b$ are the solutions of the equation
\begin{equation}
\label{eqn beta,b}
x^2+(a+\alpha)x+\frac{(a-\alpha)^2}{1-a\alpha}-3=0\,,
\end{equation}
verifying $\beta<-1$, $b>1$, and $\alpha$ and $a$ are the unique
solutions of the algebraic system
$$
\begin{cases}
2\,(a+\alpha)(3-a\alpha-a-\alpha)(3-a\alpha+a+\alpha)
+(\lambda-\mu) (a-\alpha)^3 =0\,
\\
(\lambda+\mu)^2(a-\alpha)^6=4\,(3+a\alpha)^3 (1-a\alpha)
(2+a+\alpha)(2-a-\alpha)\,,
\end{cases}
\leqno{\mathbf{(Syst)}}
$$
verifying $-1<\alpha <a<1$.
\end{teorema}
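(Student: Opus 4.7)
The strategy is to exploit the description from Section~2: writing $g=(\psi^{(1)})^{-1}\colon\overline{\mathbb{C}}\to\mathcal{R}$, the composition $G=\pi\circ g$ is a degree-$3$ rational function, and the three preimages $G^{-1}(w)=\{z_0,z_1,z_2\}$ parametrize the three sheet-points above $w$. Once $G$ is explicit, the branches of $\psi^{(1)}$ are recovered by inverting $G$ and renormalizing to match~\eqref{eq:psi}, and $\psi^{(2)}$ is then the unique M\"obius transform of $\psi^{(1)}$ producing the prescribed behavior at $\infty^{(2)}$. So the heart of the proof is to determine $G$ explicitly.

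\textbf{Normal form and critical points.} The three distinct poles of $G$ are $\psi^{(1)}(\infty^{(k)})$, $k=0,1,2$. A M\"obius change of variable on $\overline{\mathbb{C}}$ moves them to $z=\infty,-1,1$; a further scalar normalization $G(a)=1$ at a chosen critical point $a$ then forces $G=H/H(a)$ with $H$ of the shape given. Reality of $h,A,B$ comes from~\eqref{eq:simetria}. A direct computation yields $H'(z)=1+A/(1-z)^2+B/(1+z)^2$, whose numerator is a degree-$4$ polynomial, so $G$ has exactly four critical points (necessarily simple by Riemann--Hurwitz applied to the $3$-sheeted cover $\pi$). These critical points project to the four endpoints of $\Delta_1\cup\Delta_2$; reality of the coefficients, together with the topology of the cover (disjoint real slits glued in the prescribed cyclic order), forces them to be real and ordered $\beta<-1<\alpha<a<1<b$, with critical-value pairing $G(\alpha)=-1,\ G(a)=1,\ G(\beta)=-\mu,\ G(b)=\lambda$.

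\textbf{Deriving the algebraic relations.} Equating the numerator of $H'(z)$ with $(z-\alpha)(z-a)(z-b)(z-\beta)$ and applying Vieta's formulas gives $\alpha+a+b+\beta=0$. Evaluating the same identity at $z=\pm 1$ yields $4A=\prod_k(1-z_k)$ and $4B=\prod_k(1+z_k)$, i.e.\ the stated formulas for $A,B$. Eliminating $A,B$ from the remaining Vieta relations produces the product $b\beta=(a-\alpha)^2/(1-a\alpha)-3$, so that $\beta,b$ are the two roots of the quadratic stated; the ordering $\beta<-1<b$ picks out the roots. The condition $H(a)+H(\alpha)=0$ (equivalent to $G(a)=1,G(\alpha)=-1$) is linear in $h$ and, after substitution of $A,B$ and use of $\beta+b=-(\alpha+a)$ and $b\beta=(a-\alpha)^2/(1-a\alpha)-3$, yields the expression for $h$.

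\textbf{The system and main difficulty.} The two remaining conditions $G(b)=\lambda,\ G(\beta)=-\mu$ are symmetric/antisymmetric in $b,\beta$. Their sum $H(b)+H(\beta)=(\lambda-\mu)H(a)$ is a symmetric function of $b,\beta$, hence expressible in terms of $a,\alpha$ alone using the quadratic, and after clearing denominators produces the first equation of $\mathbf{(Syst)}$, the factor $(\lambda-\mu)(a-\alpha)^3$ arising from the antisymmetry structure. The squared difference $(H(b)-H(\beta))^2=(\lambda+\mu)^2H(a)^2$ is again symmetric in $b,\beta$ and produces the second equation of $\mathbf{(Syst)}$, whose left-hand side carries the factor $(\lambda+\mu)^2(a-\alpha)^6$. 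The main obstacle is this final algebraic reduction, which requires systematic elimination of $\beta,b$ via the quadratic together with careful simplification of numerators and denominators. Uniqueness of the solution with $-1<\alpha<a<1$ then follows from a monotonicity argument applied to the topological picture established in the second paragraph.
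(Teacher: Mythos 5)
Your forward derivation follows essentially the same route as the paper: construct $G=\pi\circ g$, normalize poles to $z=-1,1,\infty$ and $G(a)=1$ so that $G=H/H(a)$ with $H'$ of the stated partial-fraction form, evaluate the numerator identity at $z=\pm1$ to get $A,B$, use Vieta to get $\alpha+a+\beta+b=0$ and $\beta b$, use $H(\alpha)=-H(a)$ to get $h$, and eliminate $\beta,b$ from $G(b)=\lambda$, $G(\beta)=-\mu$. Your choice of symmetric combinations (the sum $H(b)+H(\beta)=(\lambda-\mu)H(a)$ and the squared difference $(H(b)-H(\beta))^2=(\lambda+\mu)^2H(a)^2$) differs in detail from the paper's (which uses $\lambda H(\beta)+\mu H(b)=0$ and $(\lambda+\mu)H(a)=H(b)-H(\beta)$), but both are elementary reformulations of the same two conditions and lead to the same $\mathbf{(Syst)}$.

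The genuine gap is in the last sentence: ``Uniqueness \ldots then follows from a monotonicity argument applied to the topological picture'' is not a proof, and the paper devotes an entire subsection (and three lemmas plus a corollary) to exactly this point. Two issues in particular cannot be waved away. First, given \emph{any} pair $(\alpha,a)$ with $-1<\alpha<a<1$ solving $\mathbf{(Syst)}$, you must verify that the quadratic $x^2+(a+\alpha)x+(a-\alpha)^2/(1-a\alpha)-3=0$ actually has roots $\beta<-1<1<b$; this is a nontrivial inequality (paper's Lemma~\ref{l:betab}, via Lemma~\ref{l:pq}), not a consequence of ``the ordering picks out the roots.'' Second, having derived $\mathbf{(Syst)}$ only as a necessary condition, you must show the converse: that the $H$ built from any admissible solution of $\mathbf{(Syst)}$ recovers all the defining properties in \eqref{relsG}, yields a degree-$3$ branched cover with cuts precisely over $[-\mu,-1]$ and $[1,\lambda]$, and therefore (after a M\"obius normalization fixing $-1,1,\infty$) coincides with the geometric $G$. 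Only then can you conclude that the critical points $\alpha,a$ — and hence the solution of $\mathbf{(Syst)}$ — are unique. Without this reverse direction neither the uniqueness claim nor the assertion ``these formulas compute $\psi^{(l)}$'' is established; your squaring step also silently discards a sign which must be recovered from $H(\beta)<0<H(b)$, a fact that again belongs to this unproved converse. Finally, the explicit coefficients $1/H(a)$ and $A/(2H(a))$ in the formulas for $\psi^{(1)}$, $\psi^{(2)}$ still require a short Laurent-expansion computation at $\infty^{(1)}$, $\infty^{(2)}$, and $g\neq(\psi^{(1)})^{-1}$ as written (they differ by an affine map).
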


Since $H^{-1}(w)$ is the solution of the cubic equation
$$
z^3 - (w+A-B-h)z^2 -(1+A+B)z +w-h=0 \,,
$$
the functions $\psi^{(1)}$ and $\psi^{(2)}$ can be computed
explicitly if we know $\alpha$ and $a$.

\medskip

As follows from the proof, the rational function $G$, given by
this theorem, is given alternatively by \eqref{pi-circ-g}, where
$g$ is one of the conformal homeomorphisms of $\overline{\CC}$
onto $\R$. As part of the proof of Theorem \ref{tnosimetrico} we
will also obtain that $G$ and the real numbers $\beta,\alpha,a,b$
constitute a unique solution of the following system of relations
\begin{equation}    \label{relsG}
\begin{cases}
G=\frac PQ, \qquad \deg P=3, \; \deg Q=2, \\
\text{$-1$ and $1$ are poles of $G$}, \\
\lim_{z\to\infty} \frac {G(z)}z>0, \\
\beta<-1<\alpha<a<1<b, \\
G'(\beta)=G'(\alpha)=G'(a)=G'(b)=0,\\
-\mu= G(\b), \quad \lambda=G(b), \\
-G(\alpha)=G(a)=1.
\end{cases}
\end{equation}

\begin{teorema}
\label{tsistema} Consider the subsets of $\RR^2$
$$
%\mathfrak{A}
\A=\{(a,\al): -1\le a<\al\le 1\}, \qquad \Omega= \{(\la,\mu):
\la\ge 1,\,\mu\ge 1\}.
$$
Then system {{\bf (Syst)}} defines a one-to-one correspondence
between these sets. Moreover, this mapping transforms the set
$\{(a,\al): -1 < a<\al< 1\}$ onto the interior of $\Om$,  the
sides $\{a=-1\}$, $\{\al=1\}$ of the triangle $\A$ onto the rays
$\{\la=1\},\, \{\mu=1\}\subset \Om$, respectively, and the vertex
$(a_0,\al_0)=(-1,1)$ of $\A$ onto the point $(1,1)\in \Om$.
\end{teorema}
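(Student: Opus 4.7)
The plan is to analyze the map $F\colon\A\to\RR^2$ defined directly from $\mathbf{(Syst)}$: the second equation determines $\lambda+\mu$ (as its unique positive square root) and the first determines $\lambda-\mu$, from which $(\lambda,\mu)=F(a,\alpha)$ is recovered. I would organize the argument in three stages, and then comment on where the real difficulty lies.

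\emph{Stage 1 (well-definedness and continuity).} On $\A$ the factor $(a-\alpha)^6$ in the denominators is strictly positive. I would also check that each remaining factor on the right of the second equation of $\mathbf{(Syst)}$ is positive on $\A$: $3+a\alpha\ge 2>0$; $1-a\alpha>0$, since $a\alpha=1$ on $[-1,1]^2$ forces $a=\alpha=\pm1$, which is excluded from $\A$; and $(2+a+\alpha)(2-a-\alpha)=4-(a+\alpha)^2>0$ because $|a+\alpha|<2$ whenever $a<\alpha$ in $[-1,1]$. Thus $F$ is continuous on $\A$ and $\lambda+\mu>0$.

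\emph{Stage 2 (boundary behavior).} Substituting $a=-1$ into $\mathbf{(Syst)}$ and simplifying gives
\[
\lambda+\mu=\frac{2(3-\alpha)^2}{(1+\alpha)^2},\qquad \lambda-\mu=\frac{16(\alpha-1)}{(1+\alpha)^2},
\]
so that $\lambda\equiv1$ and $\mu=(\alpha^2-14\alpha+17)/(1+\alpha)^2$. A direct derivative computation then yields $d\mu/d\alpha=16(\alpha-3)/(1+\alpha)^3<0$ for $\alpha\in(-1,1]$, so the side $\{a=-1\}$ is mapped homeomorphically onto the ray $\{\lambda=1,\,\mu\ge1\}$. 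The involution $(a,\alpha;\lambda,\mu)\mapsto(-\alpha,-a;\mu,\lambda)$ is a symmetry of $\mathbf{(Syst)}$ that preserves $\A$ and interchanges its two finite sides, so the analogous statement holds for $\{\alpha=1\}$ and the ray $\{\mu=1\}$; in particular, the vertex $(-1,1)$ maps to $(1,1)$. Finally, $(a-\alpha)^6\to 0$ as $(a,\alpha)$ approaches the open diagonal of $\bar\A$, forcing $\lambda,\mu\to+\infty$, so $F$ is proper.

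\emph{Stage 3 (interior bijection and conclusion).} Theorem~\ref{tnosimetrico} asserts that for every $(\lambda,\mu)$ with $\lambda,\mu>1$ the system $\mathbf{(Syst)}$ has a \emph{unique} pair $(a,\alpha)$ in $\A^\circ$ (after relabeling if needed to match the convention $a<\alpha$ used here); this uniqueness is precisely the uniqueness of the normalized conformal representation of $\R$ modulo the M\"obius group discussed in Section~2. Consequently $F\colon\A^\circ\to\Omega^\circ$ is a bijection, and combining with Stage 2 yields the full bijection $\A\to\Omega$ together with the boundary correspondences claimed.

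\emph{Main obstacle.} The essential difficulty is the injectivity of $F$ on $\A^\circ$, since $\mathbf{(Syst)}$ is genuinely nonlinear and could \emph{a priori} possess spurious extra roots in the triangle. The plan above sidesteps this by invoking the uniqueness clause of Theorem~\ref{tnosimetrico}, which itself rests on the uniqueness of the conformal map $\psi^{(l)}$. A self-contained algebraic alternative would be to verify non-vanishing of the Jacobian of $F$ on $\A^\circ$ by direct computation, and then pass from local to global injectivity by a degree or covering argument, using the proper, degree-one boundary behavior established in Stage 2.
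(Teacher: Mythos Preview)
Your Stages 1 and 2 are correct; the direct substitution $a=-1$ together with the involution $(a,\alpha;\lambda,\mu)\mapsto(-\alpha,-a;\mu,\lambda)$ is a clean route to the boundary correspondences. The paper reaches the same boundary statements (and more) via two factorizations, found by computer algebra, namely
\[
v-(2-u)^2=\frac{16\,(a^2-1)(\alpha^2+2a\alpha-3)^3}{(a-\alpha)^6},\qquad
v-(2+u)^2=\frac{16\,(\alpha^2-1)(a^2+2a\alpha-3)^3}{(a-\alpha)^6},
\]
with $u=\lambda-\mu$, $v=(\lambda+\mu)^2$.

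Stage 3, however, has a genuine gap. Theorem~\ref{tnosimetrico} tells you that every $(\lambda,\mu)\in\Omega^\circ$ has exactly one $F$--preimage in $\A^\circ$; that only yields a bijection from $F^{-1}(\Omega^\circ)\cap\A^\circ$ onto $\Omega^\circ$. To promote this to ``$F\colon\A^\circ\to\Omega^\circ$ is a bijection'' you still need the inclusion $F(\A^\circ)\subseteq\Omega^\circ$, i.e.\ that \emph{every} interior point of the triangle gives $\lambda>1$ and $\mu>1$. Nothing in Stages 1--3 provides this: Stage~1 only yields $\lambda+\mu>0$, and Stage~2 controls $F$ on $\partial\A$, not on $\A^\circ$ (a connectedness argument from the boundary would already require knowing that no interior point lands on $\partial\Omega$, which is the very point at issue). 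In the paper this inclusion is exactly what the two identities above supply: on $\A^\circ$ one has $a^2-1<0$, $\alpha^2-1<0$ and both cubic factors are negative, so the two right-hand sides are strictly positive, whence $\sqrt v>2+|u|$ and thus $\lambda,\mu>1$. Your ``Main obstacle'' paragraph names a valid alternative (nonvanishing Jacobian plus a proper--covering argument), and the paper in fact carries out that Jacobian computation in Section~\ref{sec:num-res}; but as written, Stage~3 does not close the gap by either route.
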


If $\Delta_1$ and $\Delta_2$ have the same Euclidean length, then
$\mu=\lambda$, and we obtain a very simple result.

\begin{teorema}
\label{tsimetrico} If $\lambda = \mu$, then
$$
\psi^{(1)}= \frac{1+a^2}{2a^3}\,(G^{-1}+1) \,, \qquad \qquad
\psi^{(2)}=  \frac {(1-a^2)^2}{8a^3}\, \frac{G^{-1}+1}{G^{-1}-1}
\;.
$$
where $G(z):=H(z)/H(a)$,
$$
H(z) = z - \frac{(1-a^2)^2 z}{(1+a^2)(1-z^2)}\,,
$$
and $a$ is the unique solution on the interval $(0,1)$ of the
biquartic equation
$$
a^8 +(16 \lambda^2-8)a^6 +18a^4 -27=0\,.
$$
\end{teorema}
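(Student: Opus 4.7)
The plan is to derive Theorem \ref{tsimetrico} as a direct specialization of Theorem \ref{tnosimetrico} to the symmetric situation $\lambda=\mu$, exploiting the reflection symmetry $x\mapsto -x$ of the configuration $\Delta_1\cup\Delta_2 = [-\mu,-1]\cup[1,\lambda]$. First I would analyze the first equation of $\mathbf{(Syst)}$ with $\lambda-\mu=0$: it factorizes as
$$
2(a+\alpha)\bigl((3-a\alpha)^2-(a+\alpha)^2\bigr)=0.
$$
Since $-1<\alpha<a<1$ forces $|a+\alpha|<2<3-a\alpha$, the second factor cannot vanish, so the relation $\alpha=-a$ must hold, with $0<a<1$.

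Substituting $\alpha=-a$ into the second equation of $\mathbf{(Syst)}$ I obtain, after canceling common factors, $16\lambda^2 a^6=(3-a^2)^3(1+a^2)$. Expanding the right-hand side and rearranging yields exactly the biquartic equation
$$
a^8+(16\lambda^2-8)a^6+18 a^4-27=0
$$
stated in the theorem. Uniqueness of the solution in $(0,1)$ will follow from the fact that the left-hand side, viewed as $f(a)$, satisfies $f(0)=-27<0$, $f(1)=16(\lambda^2-1)\ge 0$, and (using $\lambda\ge 1$) $f'(a)=2a^3\bigl(4a^4+(48\lambda^2-24)a^2+36\bigr)>0$ on $(0,1)$, so that $f$ is strictly increasing there; equivalently, one can invoke Theorem \ref{tsistema}, which already guarantees a unique pair $(a,\alpha)$ for each $(\lambda,\mu)$.

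Next, I would propagate $\alpha=-a$ through the closed-form ingredients of Theorem \ref{tnosimetrico}. From $(a+\alpha)=0$, formula \eqref{def h} gives $h=0$. The quadratic \eqref{eqn beta,b} collapses to $x^2=\tfrac{3-a^2}{1+a^2}$, so $\beta=-b$ with $b=\sqrt{(3-a^2)/(1+a^2)}$. Plugging $(\alpha,\beta)=(-a,-b)$ into \eqref{def A,B}, both coefficients become
$$
A=B=\tfrac14(1-a^2)(1-b^2)=-\frac{(1-a^2)^2}{2(1+a^2)}.
$$
Then $H$ of \eqref{def H(z)} simplifies, via the identity $\frac{Az}{1-z}+\frac{Bz}{1+z}=\frac{2Az}{1-z^2}$ when $A=B$, to $H(z)=z-\dfrac{(1-a^2)^2 z}{(1+a^2)(1-z^2)}$, matching the statement.

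Finally, evaluating $H$ at $z=a$ gives $H(a)=\dfrac{2a^3}{1+a^2}$, so $1/H(a)=(1+a^2)/(2a^3)$, and
$$
\frac{A}{2H(a)}=-\frac{(1-a^2)^2}{8 a^3}.
$$
Substituting these into the formulas $\psi^{(1)}=\frac{1}{H(a)}(1+G^{-1})$ and $\psi^{(2)}=\frac{A}{2H(a)}\cdot\frac{1+G^{-1}}{1-G^{-1}}$ of Theorem \ref{tnosimetrico}, and rewriting $\frac{1+G^{-1}}{1-G^{-1}}=-\frac{G^{-1}+1}{G^{-1}-1}$ to absorb the minus sign, yields precisely the expressions claimed for $\psi^{(1)}$ and $\psi^{(2)}$. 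The only step that is not immediate bookkeeping is the factorization argument that rules out the second factor in the first equation of $\mathbf{(Syst)}$, and the monotonicity of $f(a)$ used for the uniqueness claim; both are elementary estimates.
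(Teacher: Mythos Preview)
Your proposal is correct and follows essentially the same route as the paper: both derive $\alpha=-a$ from the first equation of \textbf{(Syst)} via the positivity of $(3-a\alpha)^2-(a+\alpha)^2$, then specialize $h,\beta,b,A,B,H$ accordingly, obtain the biquartic from the second equation of \textbf{(Syst)}, and prove uniqueness by showing the polynomial is strictly increasing on $(0,1)$ (the paper does this after the substitution $t=a^2$, you work directly with $a$). The only cosmetic differences are the order of the steps and your alternative appeal to Theorem~\ref{tsistema} for uniqueness.
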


In this case, $H^{-1}(w)$ is the solution of the cubic equation
$$
z^3 - w z^2 + \frac{a^4-3a^2}{1+a^2}\, z + w = 0 \,.
$$

\

\section{
Proof of Theorem \ref{tnosimetrico}} \label{computations}

\subsection{
A geometric definition of $G$} Suppose intervals
$\Delta_1=[-\mu,-1]$ and $\Delta_2=[1,\l]$ \label{ss:geom G} are
given. Then
 there exists a
conformal homeomorphism  $g: \overline{\CC} \longrightarrow \R$
such that $G:=\pi\circ g: \overline{\CC} \longrightarrow
\overline{\CC}$ is a rational function $G=P/Q$ with real
coefficients, $\deg P=3$ and $\deg Q=2$. In fact, $g$ can be
constructed as follows. Consider a conformal map $g$ from the
closed upper halfplane $\overline\HH$ onto the simply connected
set
$$
\R^+:= \Big\{\zeta\in \R_0: \Im \pi(\zeta) \le 0\Big\} \bigcup
\Big\{\zeta\in \R_1: \Im \pi(\zeta) \ge 0\Big\} \bigcup
\Big\{\zeta\in \R_2: \Im \pi(\zeta) \le 0\Big\}.
$$
Notice that $\R^+$ is ``one half" of $\R$ and
$g\big(\overline\RR\big)$ coincides with the boundary of $\R^+$.
We can choose $g$ so that for some $\beta<z_1<\alpha<a<z_2<b$ we
have $g((-\infty,\b])=(-\infty,-\mu]^{(1)}$,
$g([\b,z_1])=(-\infty,-\mu]^{(0)}$,
$g([z_1,\a])=[-1,\infty)^{(0)}$, $g([\a,a])=[-1,1]^{(1)}$,
$g([a,z_2])=(-\infty,1]^{(2)}$, $g([z_2,b])=[\l,\infty)^{(2)}$ and
$g([b,\infty))=[\l,\infty)^{(1)}$.

By Schwarz's reflection principle, we can extend $g$ to a
conformal map (which we denote also by $g$) from $\CC$ into $\R$,
which is symmetric with respect to the real line. Then
$G:=\pi\circ g: \overline{\CC} \longrightarrow \overline{\CC}$ is
holomorphic; therefore, it is a rational function $G=P/Q$, where
$P$ and $Q$ have real coefficients, since $G$ maps $\overline\RR$
into $\overline\RR$. As $G$ has degree $3$ and  has three poles
$(z_1,z_2,\infty)$, then a fortiori $\deg P=3$ and $\deg Q=2$.

$G$ has two finite poles and four critical points on the real
line. Taking $G(d_1z+d_2)$ (with $d_1>0, d_2\in\RR$) instead of
$G(z)$, if necessary, without loss of generality we can assume
that the poles of $G$ are $-1$ and $1$. These normalizations
define $G$ uniquely.

The critical points of $G$ satisfy
$$
\beta<-1<\alpha<a<1<b.
$$
Moreover, $G$ increases on $(-\infty,\beta)$, $(\a,a)$,
$(b,\infty)$, and decreases on $(\beta,-1)$, $(-1,\a)$, $(a,1)$,
$(1,b)$. Therefore, $G(\a)< G(a)$; we also have
$-\mu=G(\b)<G(\a)=-1$ and $1=G(a)<G(b)=\lambda$.

It follows that $G$ satisfies all the relations \eqref{relsG}.

This way of constructing $G$ shows that it is in fact a kind of
Schwarz--Christoffel mapping from $\overline\HH$ onto the Riemann
surface $\R^+$, which may be viewed as an unbounded polygon, whose
angles are all equal to $2\pi$.

\subsection{
An auxiliary rational function $H$} \label{ss:H} In order to
compute $G$, let us consider the rational function
\begin{equation}
\label{H=cG} H(z):=c \,G(z)=c\, \frac {P(z)}{Q(z)}, \qquad \deg
P=3, \;\deg Q=2,
\end{equation}
where $c$ is a positive constant such that $\lim_{z\to\infty}
H(z)/z=1$.

Then, the requirements \eqref{relsG} on $G$ are equivalent to the
following conditions on the rational function $H$:
\begin{equation} \label{relsH}
\begin{cases}
H=\displaystyle \frac {P_1}Q, \quad \deg P_1=3, \; \deg Q=2, \\
\text{\rm $-1$ and $1$ are poles of $H$}, \\
\mu= H(\b)/H(\alpha), \quad \lambda= H(b)/H(a), \quad
H(\alpha)=-H(a), \\
\beta<-1<\alpha<a<1<b,  \\
H'(\beta)=H'(\alpha)=H'(a)=H'(b)=0, \\
\lim_{z\to\infty} H(z)/z=1.
\end{cases}
\end{equation}

Since $-1$ and $1$ are simple poles of $H$, we get from the two
last equations in \eqref{relsH} the equalities:
$$
H'(z)=\frac{(z-\beta)(z-\alpha)(z-a)(z-b)}{(z^2-1)^2} =
1+\frac{A}{(z-1)^2}+\frac {B}{(z+1)^2}\,.
$$
Therefore,
$$
H(z)=h+\int_0^z
\frac{(\zeta-\beta)(\zeta-\alpha)(\zeta-a)(\zeta-b)}{(\zeta^2-1)^2}\;
d\zeta = h + z +\frac{Az}{1-z}+\frac {Bz}{1+z}\,,
$$
where $h=H(0)$, so that \eqref{def H(z)} holds.

\subsection{
The deduction of system {{\bf (Syst)}}} \label{ss:deduct} We are
going to prove equations \eqref{def h}--\eqref{eqn beta,b} and the
validity of {\bf (Syst)}. In order to do that, we will express
$\beta,b,h,A$ and $B$ in terms of $\a$ and $a$ and write down
equations on $\a$, $a$. The choice of $\a$ and $a$ as unknown
variables seems to be the best one from the numerical point of
view, because they are bounded $(-1<\alpha<a<1)$.

From the equation for $H'(z)$ we deduce that
\begin{equation} \label{1}
(z-\beta)(z-\alpha)(z-a)(z-b)=(z^2-1)^2+A(z+1)^2+B(z-1)^2.
\end{equation}
Replacing $z=1$ and $z=-1$ in (\ref{1}), we obtain
\begin{equation}
\label{2.x} 4A=(1-\beta)(1-\alpha)(1-a)(1-b)\,, \qquad
4B=(1+\beta)(1+\alpha)(1+a)(1+b)\,.
\end{equation}
Identifying the coefficients in (\ref{1}), it follows that
\begin{align}
\label{a} & -\beta-\alpha-a-b=0 \,, \\
\label{b} & \beta\alpha+\beta a+\beta b+\alpha a+\alpha b+ab=-2+A+B \,, \\
\label{c} & -\beta\alpha a-\beta\alpha b-\beta a b-\alpha a b=2A-2B \,, \\
\label{d} & \beta\alpha a b=1+A+B \,.
\end{align}

Using (\ref{b}) and (\ref{d}), we get
\begin{equation} \label{betabdos}
\alpha (\beta+b) + a (\beta+b) + \beta b + \alpha a=-3+\beta\alpha
a b.
\end{equation}
Relation (\ref{a}) gives $\beta + b=-(\alpha+a)$, and we deduce
that
\begin{equation} \label{betab}
\beta b=\frac {a^2+\alpha^2+a \alpha-3}{1-a\alpha}= -3+\frac
{(a-\alpha)^2}{1-a\alpha} \,.
\end{equation}
Therefore, $\beta$ and $b$ can be obtained from $a$ and $\alpha$
as the solutions of the equation
\begin{equation}
\label{ecuacion} x^2+(a+\alpha)x+\frac
{(a-\alpha)^2}{1-a\alpha}-3=0\,.
\end{equation}

Since
$$
H(z)=h+z+\frac {Az}{1-z}+\frac {Bz}{1+z} \,,
$$
the condition $H(\alpha)=-H(a)$ means that
\begin{equation}\label{h}
4h+4\alpha+\alpha(1-\beta)(1-a)(1-b)+\alpha(1+\beta)(1+a)(1+b)
\end{equation}
\[ =
-4h-4a-a(1-\beta)(1-\alpha)(1-b)-a(1+\beta)(1+\alpha)(1+b).
\]
Using that $\beta + b=-(\alpha+a)$, we express $h$ in terms of $a$
and $\al$:
\begin{equation}
\label{2.y} 4h=(a+\alpha)(2a\alpha-3-\beta
b)=(a+\alpha)\Big(2a\alpha- \frac {(a-\alpha)^2}{1-a\alpha}\Big) .
\end{equation}

We are going to make use of the equations
\begin{equation}
\label{H(a)H(b)}
\begin{cases}
\la H(a)=H(b), \\
\mu H(a)=-H(\b).
\end{cases}
\end{equation}
Since $H(\b)<0<H(b)$, these equations are equivalent to the
following ones:
\begin{equation}
\label{H(a)+-H(b)}
\begin{cases}
\la H(\be)+\mu H(b)=0,\\
\;(\la+\mu)\,H(a)\;=H(b)-H(\b).
\end{cases}
\end{equation}

We have
\begin{align}
2H(a)&=2h+2a+\frac a2\,(1-\beta)(1-\alpha)(1-b)+\frac a2\,(1+\beta)(1+\alpha)(1+b) \notag \\
&=2h+a(3-\alpha(a+\alpha)+b\beta)\,, \notag \\
2H(b)&=2h+b\beta(a +\alpha)+b(3+a\alpha)\,, \notag \\
2H(\beta)&=2h+b\beta(a +\alpha)+\beta(3+a\alpha)\,. \notag
\end{align}
So we get from \eqref{H(a)+-H(b)} that
\begin{equation}
\label{system}
\begin{cases}
(\lambda+\mu)(2h+b\beta(a+\alpha))+(\lambda \beta+\mu b)(3+a\alpha)=0\,, \\
(\lambda+\mu)(2h+a(3-\alpha(a+\alpha)+b\beta))=(b-\beta)(3+a\alpha)\,.
\end{cases}
\end{equation}
Since $b$ and $\beta$ are the solutions of the equation
$x^2+(a+\a)x+b\b=0$, we have
$$
b=\frac {-(a+\alpha)+\sqrt{(a+\alpha)^2-4b\beta}}{2} \qquad
\hbox{and} \qquad \beta=\frac
{-(a+\alpha)-\sqrt{(a+\alpha)^2-4b\beta}}{2} \,.
$$
From this, we deduce that
$$
b-\beta= \sqrt{(a+\alpha)^2-4b\beta} \,,
$$
and
$$
\lambda \beta+\mu b= -\frac {\lambda+\mu}2\, (a+\alpha) + \frac
{\mu-\lambda}2\, \sqrt{(a+\alpha)^2-4b\beta}
$$
(one could, in fact, express here $b\beta$ in terms of $a$ and
$\a$, see \eqref{betab}). These last two equations and
(\ref{betab}) imply that (\ref{system}) is equivalent to:
\begin{equation}
\label{system2}
\begin{cases}
(\lambda+\mu)\Big(4h+\big(2\,\frac{(a-\alpha)^2}{1-a\alpha}-6\big)(a+\alpha)\Big)+\big(
-(\lambda+\mu) (a+\alpha)  +  (\mu-\lambda)
\sqrt{(a+\alpha)^2-4b\beta}\; \big)(3+a\alpha)=0\,,
\\
(\lambda+\mu)\Big(2h+a\Big(-\alpha(a+\alpha)+ \frac
{(a-\alpha)^2}{1-a\alpha}
\Big)\Big)=(3+a\alpha)\sqrt{(a+\alpha)^2-4b\beta}\;.
\end{cases}
\end{equation}
After replacing $4h$ by its value, given in \eqref{2.y}, we get
$$
 4h+\Big(2\,\frac{(a-\alpha)^2}{1-a\alpha}-6\Big)(a+\alpha) -
(a+\alpha)(3+a\alpha)
 = (a+\alpha)\Big(-9+a\alpha+\frac{(a-\alpha)^2}{1-a\alpha}\Big)
\,,
$$
and
\begin{equation}
\label{H(a)} 2H(a)=2h+a\Big(-\alpha(a+\alpha)+ \frac
{(a-\alpha)^2}{1-a\alpha} \Big) = \frac
{(a-\alpha)^3}{2(1-a\alpha)} \;.
\end{equation}
Then, (\ref{system2}) is equivalent to
\begin{equation}
\label{system3}
\begin{cases}
(\lambda+\mu)
(a+\alpha)\Big(-9+a\alpha+\frac{(a-\alpha)^2}{1-a\alpha}\Big) +
(\mu-\lambda) \sqrt{(a+\alpha)^2-4b\beta}\, (3+a\alpha)=0\,,
\\
(\lambda+\mu)\,\frac{(a-\alpha)^3}{1-a\alpha}=2\,(3+a\alpha)\sqrt{(a+\alpha)^2-4b\beta}\;.
\end{cases}
\end{equation}

Substituting the second equation in (\ref{system3}) into the first
one, we obtain an equivalent system:
\begin{equation}
\label{system4}
\begin{cases}
2\,(a+\alpha)\Big(-9+a\alpha+\frac{(a-\alpha)^2}{1-a\alpha}\Big)
+(\mu-\lambda) \frac {(a-\alpha)^3}{1-a\alpha} =0\,,
\\
(\lambda+\mu)\,\frac{(a-\alpha)^3}{1-a\alpha}=2\,(3+a\alpha)\sqrt{(a+\alpha)^2-4b\beta}\;.
\end{cases}
\end{equation}
The first equation in \eqref{system4} is equivalent to the first
equation in {\bf (Syst)}. Since $-1<\a<a<1$, both terms in the
second equation in (\ref{system4}) are positive. Therefore, this
equation is equivalent to
$$
(\lambda+\mu)^2\frac{(a-\alpha)^6}{(1-a\alpha)^2}=4\,(3+a\alpha)^2
\Big( (a+\alpha)^2+12-4\,\frac{(a-\alpha)^2}{1-a\alpha}\Big)\,,
$$
and hence to the second equation in {\bf (Syst)}.

We conclude that the functions $G$ and $H$ are necessarily given
by the formulas in Theorem \ref{tnosimetrico}, where $a$ and
$\alpha$ are \textit{some} solution of {{\bf (Syst)}}. The
uniqueness of the solution will be checked in the next subsection.
\subsection{The reverse arguments}
\label{ss:reverse} First we need some technical lemmas.

\begin{lema}
\label{l:pq}
 Let $t_1,t_2$ be the solutions of the real equation
  $t^2+pt+q=0$, with $\Re t_1\le \Re t_2$. Then $t_1<-1$ and $t_2>1$
  if and only if $1+|p|<-q$.
\end{lema}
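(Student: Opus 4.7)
The plan is to reduce the claim to evaluating the quadratic $f(t)=t^2+pt+q$ at $t=\pm 1$ and using the usual ``parabola opens upward, negative between the roots'' picture. Observe first that the condition $t_1<-1$ already forces $t_1$ (and therefore $t_2$, by Vieta) to be real, so the ordering $t_1\le t_2$ replaces $\Re t_1\le \Re t_2$ once we are in this regime; and in the reverse direction I will need to verify reality of the roots before talking about $t_1$ and $t_2$.

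For the forward implication, assume $t_1<-1<1<t_2$. Since $f(t)=(t-t_1)(t-t_2)$ is strictly negative on the open interval $(t_1,t_2)$, which contains both $-1$ and $1$, I get
\[
f(-1)=1-p+q<0\quad\text{and}\quad f(1)=1+p+q<0,
\]
that is, $q<p-1$ and $q<-p-1$. Taking the smaller of the right-hand sides yields $q<-1-|p|$, which is exactly $1+|p|<-q$.

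For the reverse implication, assume $1+|p|<-q$. Then $q<-1$, so in particular the discriminant satisfies $p^2-4q\ge -4q>4>0$, hence $f$ has two distinct real roots $t_1<t_2$. The same inequality gives $f(\pm 1)=1\pm p+q\le 1+|p|+q<0$, so both $-1$ and $1$ lie strictly between the roots, i.e.\ $t_1<-1<1<t_2$.

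There is really no obstacle here; the only thing to be careful about is not invoking $t_1,t_2\in\RR$ before it has been established in the ``$\Leftarrow$'' direction, which is why I deduce reality from $q<-1<0$ before comparing values of $f$ at $\pm 1$.
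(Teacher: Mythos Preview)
Your proof is correct and follows exactly the approach the paper uses: the paper's one-line argument simply says the condition is equivalent to $f(-1)<0$ and $f(1)<0$ for $f(t)=t^2+pt+q$, and you have written out the details of this equivalence carefully. Your explicit check that the roots are real in the reverse direction (via $q<-1$ forcing a positive discriminant) is a nice point that the paper leaves implicit.
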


\begin{proof}
It is obvious because the condition $t_1<-1$ and $t_2>1$ is
equivalent to $f(-1)<0$ and $f(1)<0$, where $f(t):=t^2+pt+q$.
\end{proof}

\begin{lema}
\label{l:betab}  Let $a$, $\a$ be any real numbers such that
$-1<\a<a<1$. Then the solutions $\beta, b$ of the equation
$$
x^2+(a+\alpha)x+\frac {(a-\alpha)^2}{1-a\alpha}-3=0
$$
are real. Assuming that
 $\beta\le b$,
one has $\beta<-1$, $b>1$.
\end{lema}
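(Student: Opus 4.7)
The plan is to apply Lemma \ref{l:pq} with $p = a+\alpha$ and $q = \frac{(a-\alpha)^2}{1-a\alpha}-3$. The hypothesis $-1 < \alpha < a < 1$ immediately yields $1-a\alpha>0$, so $q$ is well defined. By Lemma \ref{l:pq}, the desired conclusion $\beta < -1 < 1 < b$ will follow once I establish the two inequalities $f(1)<0$ and $f(-1)<0$, where $f(x) := x^2+px+q$; either of these inequalities alone already forces $f$ to have two real zeros, so ``$\beta\le b$'' is meaningful without a separate discriminant check.

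The main task is therefore to verify $f(\pm 1)<0$. I would multiply by $1-a\alpha>0$ and look for explicit real factorizations of $(1-a\alpha)\,f(\pm 1)$ as polynomials in $a,\alpha$. The anticipated form is
\begin{align*}
(1-a\alpha)\,f(1)  &= -(1-a)(1-\alpha)(2+a+\alpha),\\
(1-a\alpha)\,f(-1) &= -(1+a)(1+\alpha)(2-a-\alpha).
\end{align*}
A systematic route to these factorizations is to regard $(1-a\alpha)\,f(\pm 1)$ as a quadratic in $s := a+\alpha$ with coefficients depending on $a\alpha$; the discriminant comes out to the perfect square $(a\alpha+3)^2$, which produces the two linear (in $s$) factors displayed above.

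The final step is immediate: on the region $-1<\alpha<a<1$ each of the six factors $1\pm a$, $1\pm\alpha$, $2\pm(a+\alpha)$ is strictly positive, so both right-hand sides are negative, and hence $f(\pm 1)<0$, concluding the proof via Lemma \ref{l:pq}. The only genuinely nonroutine point is spotting the factorizations; this is the place where I expect the mild obstacle. As a backup, the substitution $s=1+a,\;t=1+\alpha$ (so $0<t<s<2$) converts the inequalities $f(1)<0$ and $f(-1)<0$ into $(s+t)(2-s)(2-t)>0$ and $st(4-s-t)>0$ respectively, both of which are manifest from the ranges of $s$ and $t$, providing an alternative route to the same conclusion.
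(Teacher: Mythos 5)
Your proof is correct and follows essentially the same route as the paper: both invoke Lemma~\ref{l:pq} and rest on the factorization $(1-a\alpha)\,f(1) = -(1-a)(1-\alpha)(2+a+\alpha)$ (the paper writes it as $2 - W(a,\alpha) = \frac{(1-a)(1-\alpha)(2+a+\alpha)}{1-a\alpha}$, which is the same identity). The only difference is that the paper exploits the symmetry $(a,\alpha)\mapsto(-\alpha,-a)$ to reduce to a single inequality, whereas you verify $f(\pm1)<0$ separately via the two mirror-image factorizations -- a negligible distinction.
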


\begin{proof}
By Lemma \ref{l:pq}, it is sufficient to show that
$$
1+ |a+\a| < 3 - \frac {(a-\alpha)^2}{1-a\alpha}\;,
$$
whenever $-1<\a\le a<1$. By symmetry, it suffices to verify that
$$
W(a,\a):= a+\a + \frac {(a-\alpha)^2}{1-a\alpha} < 2\,, \qquad
\hbox{if $|\a|\le a<1$.}
$$
But this is a direct consequence of
$$
2-W(a,\a)= \frac {(1-a)(1-\alpha)(2+a+\alpha)}{1-a\alpha} >0\,.
$$
\end{proof}

\begin{lema}
\label{l:GH} Suppose $\la>1$, $\mu>1$ are arbitrary real numbers.
Let $\wt G$ be the unique rational function that satisfies
\eqref{relsG}, and define $\wt H$ from \eqref{H=cG}.

\textbf{(i)} There is at least one solution $(\a,a)$ to the system
{\bf (Syst)} such that $-1<\a<a<1$.

\textbf{(ii)} Given any such solution, define $\b<-1$, $b>1$ from
\eqref{eqn beta,b}, $A$, $B$, $h$ from \eqref{def A,B}, \eqref{def
h}, and then define the corresponding function $H$ from \eqref{def
H(z)}. Set $G(z)=H(z)/H(a)$. Then $G=\wt G$, $H=\wt H$.
\end{lema}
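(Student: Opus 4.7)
Part (i) is essentially contained in the preceding discussion. The geometric construction of subsection \ref{ss:geom G} applies the Riemann mapping theorem to the simply connected surface $\R^+$ to produce a conformal homeomorphism $g\colon\overline{\CC}\to\R$, after which $\wt G=\pi\circ g$ is a rational function satisfying \eqref{relsG}. The two critical points $\alpha,a\in(-1,1)$ of $\wt G$ (equivalently, of $\wt H=c\wt G$) are then shown in subsection \ref{ss:deduct} to satisfy $\mathbf{(Syst)}$, which furnishes the required solution.

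For part (ii) the strategy is as follows. Take an arbitrary solution $(\alpha,a)$ of $\mathbf{(Syst)}$ with $-1<\alpha<a<1$, define $\beta,b,A,B,h$ and $H$ from \eqref{def H(z)}--\eqref{def A,B} and set $G=H/H(a)$. I would show that $H$ satisfies every condition listed in \eqref{relsH}; then $G$ fulfils \eqref{relsG}, and by the uniqueness of $\wt G$ observed in subsection \ref{ss:geom G} we conclude $G=\wt G$ and $H=\wt H$. The rational form of $H$, its simple poles at $\pm1$ and its behaviour at $\infty$ are read off directly from \eqref{def H(z)}, while $\beta<-1<1<b$ is provided by Lemma \ref{l:betab}. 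Differentiating \eqref{def H(z)} gives $H'(z)=1+A/(z-1)^2+B/(z+1)^2$, so the four critical-point conditions reduce to the monic quartic identity
\[
(z-\beta)(z-\alpha)(z-a)(z-b)=(z^2-1)^2+A(z+1)^2+B(z-1)^2,
\]
which I would verify coefficient by coefficient. The $z^3$-coefficient vanishes on both sides thanks to $\beta+b=-(\alpha+a)$, which is Vieta applied to \eqref{eqn beta,b}; evaluation at $z=\pm1$ reproduces the defining relations \eqref{def A,B} of $A$ and $B$; and the constant-term equality $\beta\alpha a b=1+A+B$ is an algebraic consequence of \eqref{def A,B} combined with \eqref{betab}.

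The identity $H(\alpha)=-H(a)$ is then obtained by reversing the passage from \eqref{h} to \eqref{2.y} in subsection \ref{ss:deduct}, since \eqref{def h} is literally \eqref{2.y}. The two remaining relations $\mu=H(\beta)/H(\alpha)$ and $\lambda=H(b)/H(a)$ are derived by reading the chain of equivalences connecting \eqref{H(a)H(b)} to \eqref{system4} backwards. Every step is a straightforward algebraic equivalence, the only delicate one being the squaring between \eqref{system3} and \eqref{system4}: under $-1<\alpha<a<1$ one has $(a-\alpha)^3>0$, $1-a\alpha>0$, $3+a\alpha>0$ and $\lambda+\mu>0$, so in \eqref{system3} the square root must be taken with the positive sign, which makes the reversal unambiguous. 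Since $\mathbf{(Syst)}$ is exactly \eqref{system4}, this completes the verification of \eqref{relsH}.

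I expect the main obstacle to lie in the constant-term match of the polynomial identity above: it is the single place where the implicit expression \eqref{betab} for $\beta b$ must be combined with the defining formulas \eqref{def A,B} for $A$ and $B$, and a routine but careful manipulation of symmetric functions of $\beta,\alpha,a,b$ is needed to see that the two sides agree.
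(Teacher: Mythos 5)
Your verification that $H$ satisfies all the conditions in \eqref{relsH} follows the same route as the paper: Vieta's relations for \eqref{eqn beta,b}, the polynomial identity for $H'$ checked via the coefficient equations \eqref{a}--\eqref{d}, the recovery of $H(\alpha)=-H(a)$ from \eqref{def h}, and the reversal of the chain \eqref{H(a)H(b)}$\Leftrightarrow\cdots\Leftrightarrow$\eqref{system4} with the sign analysis for the squaring step. That part is sound.

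The gap is in the very last step. You write that, once $G$ is known to satisfy \eqref{relsG}, the conclusion $G=\wt G$ follows ``by the uniqueness of $\wt G$ observed in subsection \ref{ss:geom G}.'' But what subsection \ref{ss:geom G} establishes is only that the \emph{geometrically constructed} function (the Schwarz--Christoffel-type map $\pi\circ g$, normalized so the finite poles sit at $\pm1$) is unique. It does not establish, and at that point of the paper it is not yet known, that \eqref{relsG} has at most one solution among all rational functions. Indeed, the text after Theorem \ref{tnosimetrico} explicitly announces that this uniqueness ``will also be obtained'' as part of the proof of that theorem; it is an output of Lemma \ref{l:GH}, not an input to it, so invoking it there is circular. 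The paper's proof supplies the missing argument as follows: having checked \eqref{relsH}, it uses the monotonicity pattern of $H$ on $\overline{\RR}$ and the fact that $G$ has degree $3$ with four simple critical points $\beta,\alpha,a,b$ and critical values $-\mu,-1,1,\lambda$ to show that $G$ is itself a covering $\pi\circ g$ of a three-sheeted Riemann surface $\R^*$, identifies the cuts of $\R^*$ with $(-\mu,-1)\cup(1,\lambda)$, and concludes $\R^*=\R$. Only then does it observe that $G^{-1}\circ\wt G$ is a M\"obius transformation of $\overline{\CC}$ fixing $\infty$, $-1$ and $1$, hence the identity. This Riemann-surface identification is the substantive content of part (ii), and your proposal omits it.
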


\begin{proof}
Part (i) has been already checked in subsection \ref{ss:deduct}.

Now assume that $\a,a,b,\b,H,G$ has been constructed as in (ii).
We show that
\begin{equation} \label{H0prima}
H'(z) = 1+\frac{A}{(z-1)^2}+\frac {B}{(z+1)^2}
=\frac{(z-\beta)(z-\alpha)(z-a)(z-b)}{(z^2-1)^2} \,.
\end{equation}
It suffices to check  equations (\ref{a})--(\ref{d}).

We have (\ref{a}) from the definition of $\b$ and $b$. This
definition also gives (\ref{betab}), which implies
(\ref{betabdos}). This last equation is the difference of
(\ref{b}) and (\ref{d}). The definition of $A$ and $B$ gives
$$
2A+2B=1+\b\a+\b a+\b b + \a a + \a b + a b +\b \a a b \,,
$$
and this equation is the sum of (\ref{b}) and (\ref{d}).
Therefore, we can also obtain the equations (\ref{b}) and
(\ref{d}). The definition of $A$ and $B$ also gives (\ref{c}):
$$
\begin{aligned}
-2A+2B&=\b+\a+a+b+\b \a a+\b \a b + \b a b + \a   a b\\
&=\b \a a+\b \a b + \b a b + \a   a b \,.
\end{aligned}
$$
Consequently, we have proved (\ref{H0prima}).

The definition of $h$ and formulas
$$
\b + b = -a -\a \,, \qquad \b b = \frac
{(a-\alpha)^2}{1-a\alpha}-3\,,
$$
give (\ref{h}), which is equivalent to $H(\a)=-H(a)$.

We can reverse the arguments given in Section \ref{ss:geom G} to
show implications {{\bf (Syst)}} $\Rightarrow$ \eqref{system3}
$\Rightarrow$ \eqref{H(a)+-H(b)}. We also see that \eqref{H(a)}
holds, which, together with the second equation in system
\eqref{H(a)+-H(b)}, implies that $H(a)\ne 0$, $H(b)\ne H(\b)$.
Therefore, we can deduce \eqref{H(a)H(b)} from \eqref{H(a)+-H(b)}.
Hence all the conditions \eqref{relsH} hold.

Now, we can assert that $H$ increases on $(-\infty,\beta)$,
$(\a,a)$, $(b,\infty)$, and   decreases on $(\beta,-1)$,
$(-1,\a)$, $(a,1)$, $(1,b)$. Since $H$ increases on $(\a,a)$, we
have $H(\a)<H(a)$, but $H(\a)=-H(a)$, so we deduce that $H(a)>0$.

Then $H'(\b)=H'(\a)=H'(a)=H'(b)=0$,
$H\big(\overline{\RR}\big)=\overline{\RR}$ and $\lim_{z\to \infty}
H(z)/z=1$. Furthermore, $H$ has just three poles; namely $-1$, $1$
and $\infty$, and they are simple.

The function $G(z)=H(z)/H(a)$ verifies $G(\b)=-\mu$, $G(\a)=-1$,
$G(a)=1$ and $G(b)=\lambda$. We also have
$G'(\b)=G'(\a)=G'(a)=G'(b)=0$ and
$G\big(\overline{\RR}\big)=\overline{\RR}$.

Since $H(a)>0$, the function $G$ increases on $(-\infty,\beta)$,
$(\a,a)$, $(b,\infty)$, and  decreases on $(\beta,-1)$, $(-1,\a)$,
$(a,1)$, $(1,b)$.

$G$ is a rational function of degree $3$; therefore, we conclude
that $G$ gives rise to a conformal map $g$ of $\overline{\CC}$
onto a Riemann surface $\R^*$ with three sheets $(\R^*,
\R_1^*,\R_2^*)$, so that $G=\pi\circ g$, where $\pi:\R^*\to
\overline{\CC}$ is the canonical projection. The set of branch
points of $\R^*$ is $\{G(\b), G(\a), G(a), G(b)\}=\{-\mu, -1, 1,
\l\}$. All these branch points have order two, because $G'$ has
simple zeros on $\b,\a,a,b$.

Since $\R^*=\R^* \cup \R_1^* \cup \R_2^*$ is connected and the
branch points have order two, there is some $\R_j^*$ (for
instance, $\R_1^*$) with two cuts and $\R^*, \R_2^*$ have just one
cut. By the monotonicity properties of $G$, we can deduce that
$G(\infty)=\infty^{(1)}$, $G(-1)=\infty^{(0)}$,
$G(1)=\infty^{(2)}$, and that $G\big(\overline{\HH}\big) =
(\R^*)^+$, where $(\R^*)^+$ is the union of one half of each
$\R_j^*$ ($j=0,1,2$). We also have $G\big(\overline{\RR}\big) =
\partial (\R^*)^+$. Every point of $\overline{\RR}\setminus
([-\mu,-1]\cup [1,\lambda])$ has $3$ preimages by $G$ in
$\overline{\RR}$, and every point of $(-\mu,-1)\cup (1,\lambda)$
has one preimage by $G$ in $\overline{\RR}$. Since $(\R^*)^+$ is
the union of one half of each $\R_j^*$ and it is connected, the
cuts are $(-\mu,-1)\cup (1,\lambda)$. Consequently, $\R^*=\R$.

Therefore, $G^{-1} \circ \wt G$ is a conformal map from
$\overline\CC$ onto $\overline\CC$, and it must be a M\"obius
transformation. Since $G(\infty)=\wt G(\infty)=\infty^{(1)}$,
$G(-1)=\wt G(-1)=\infty^{(0)}$, $G(1)=\wt G(1)=\infty^{(2)}$, the
M\"obius map $G^{-1} \circ \wt G$ fixes the points $\infty$, $-1$
and $1$. It follows that $G^{-1} \circ \wt G$ is the identity map,
and therefore $G = \wt G$.
\end{proof}

\begin{corolario}
The solution $(\a,a)$ to system {{\bf (Syst)}} with the properties
$-1<\a<a<1$ is unique.
\end{corolario}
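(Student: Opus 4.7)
The plan is to derive the uniqueness directly from Lemma \ref{l:GH}. Suppose $(\alpha_1,a_1)$ and $(\alpha_2,a_2)$ are two solutions of {\bf (Syst)} satisfying $-1<\alpha_i<a_i<1$. For each $i=1,2$, I apply the construction of Lemma \ref{l:GH}(ii): define $\beta_i<-1<\alpha_i<a_i<1<b_i$ from equation \eqref{eqn beta,b}, then $A_i,B_i,h_i$ via \eqref{def A,B}, \eqref{def h}, then $H_i$ from \eqref{def H(z)}, and finally $G_i(z)=H_i(z)/H_i(a_i)$.

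Next I invoke Lemma \ref{l:GH}(ii) itself to conclude that $G_1=\widetilde G=G_2$, where $\widetilde G$ is the unique rational function satisfying the normalized system \eqref{relsG} attached to the given pair $(\lambda,\mu)$. Thus both constructions produce the \emph{same} rational function.

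Finally I read off $\alpha$ and $a$ from $\widetilde G$ itself. By construction, the critical set of $G_i$ is exactly $\{\beta_i,\alpha_i,a_i,b_i\}$, so $\widetilde G$ has precisely four critical points on the real line, of which $\alpha_i,a_i$ are the two lying inside $(-1,1)$, with $\alpha_i<a_i$. Since the critical set of $\widetilde G$ is intrinsic to $\widetilde G$, and the ordering together with the membership in $(-1,1)$ singles out $\alpha$ and $a$ uniquely, we must have $\alpha_1=\alpha_2$ and $a_1=a_2$.

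This is a short argument and I do not expect any serious obstacle: the whole content has already been packaged into Lemma \ref{l:GH}, so the corollary is just a matter of observing that the map $(\alpha,a)\mapsto G$ implicit in the construction is injective because $\alpha$ and $a$ are recoverable from $G$ as the two interior critical points in $(-1,1)$ listed in increasing order.
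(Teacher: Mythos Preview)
Your argument is correct and follows essentially the same route as the paper: invoke Lemma \ref{l:GH}(ii) to conclude that any admissible solution $(\alpha,a)$ produces the same $G=\widetilde G$, and then recover $\alpha$ and $a$ as the two critical points of $\widetilde G$ lying in $(-1,1)$, ordered increasingly. The only cosmetic difference is that the paper compares an arbitrary solution to the canonical parameters $\tilde\alpha,\tilde a$ of $\widetilde G$ rather than comparing two arbitrary solutions to each other.
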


\begin{proof} Indeed, as shown above, any solution $(\a,a)$ of this system gives rise to
a function $G$, which coincides with $\wt G$. Denote the
parameters that correspond to $\wt G$ by $\tilde \a$, $\tilde a$,
etc. Then $\a=\tilde\a$ and $a=\tilde a$ (because they are the
only two critical points of $G$ and $\wt G$ on the interval
$(-1,1)$).
\end{proof}

\

Since $G(z)=H(z)/H(a)$, we obtain that $G^{-1}(w)=H^{-1}(H(a)w)$.
We have
$$
w =H(z) =\frac{ (h + z)(1-z^2) + A(1+z)z + B(1-z)z}{1-z^2}\,,
$$
or equivalently,
\begin{equation}
\label{2.z} z^3 - (w +A-B-h)z^2 -(1+A+B)z +w -h =0 \,.
\end{equation}
Observe that (\ref{2.z}) allows one to obtain an explicit
expression for $H^{-1}$ (and then for $\psi^{(1)}$ and
$\psi^{(2)}$), once we have solved {\bf (Syst)}.

\medskip

We compute now $\psi^{(1)}$ and $\psi^{(2)}$ in terms of $G$.
Recall that $\psi^{(1)}$ and $\psi^{(2)}$ are determined by
$$
\begin{aligned}
& \psi^{(1)}(w)= w+{\mathcal{O}}(1),\,\, w\rightarrow\infty
^{(1)}, \qquad \psi^{(1)}\big(\infty^{(0)}\big)=0\,,
\\
& \psi^{(2)}(w)= w+{\mathcal{O}}(1),\,\, w\rightarrow\infty
^{(2)}, \qquad \psi^{(2)}\big(\infty^{(0)}\big)=0\,.
\end{aligned}
$$

Note that $G^{-1}(\infty^{(0)})=-1$, $G^{-1}(\infty^{(1)})=\infty$
and $G^{-1}(\infty^{(2)})=1$. Hence, $\psi^{(1)}=c_1(1+G^{-1})$
for some constant $c_1$. In order to calculate the constant $c_1$,
notice that
$$
\begin{aligned}
& H(z)=z+{\mathcal{O}}(1),\qquad z\rightarrow\infty, \\
& H^{-1}(w)=w+{\mathcal{O}}(1),\qquad w\rightarrow\infty^{(1)}, \\
& G^{-1}(w)=H(a)\,w+{\mathcal{O}}(1),\qquad w\rightarrow\infty^{(1)}, \\
& w+{\mathcal{O}}(1)=c_1 H(a)\,w+{\mathcal{O}}(1),\qquad
w\rightarrow\infty^{(1)},
\end{aligned}
$$
and then
$$
c_1= \frac {1}{H(a)}\;.
$$

We also have $\psi^{(2)}=c_2(1+G^{-1})/(1-G^{-1})$ for some
constant $c_2$. In order to compute the constant $c_2$, let us
expand the left hand side of (\ref{2.z}) in powers of $(z-1)$. One
gets that (\ref{2.z}) is equivalent to
$$
 \begin{aligned}
(z-1)^3 +(3-w-A+B+h)(z-1)^2 +(2-2w-3A+B+2h)(z-1)-2A&=0\,, \\
\frac {(z-1)^2}w +\frac {(3-w-A+B+h)(z-1)}w +\frac
{2-2w-3A+B+2h}w-\frac {2A}{w(z-1)}&=0\,.
 \end{aligned}
$$

Since $H^{-1}\big(\infty^{(2)}\big)=1$, we have that
$$
\lim_{w\to\infty^{(2)}} \frac {2A}{w(1-H^{-1}(w))}=
\lim_{w\to\infty^{(2)}} \frac {2A}{w(1-z)}=2 \,,
$$
or, equivalently,
$$
\lim_{w\to\infty^{(2)}} \frac {2}{w(1-G^{-1}(w))}=
\lim_{w\to\infty^{(2)}} \frac {2}{w(1-H^{-1}(H(a)w))}=
\lim_{w\to\infty^{(2)}} \frac {2H(a)}{w(1-H^{-1}(w))}=  \frac
{2H(a)}{A}\;.
$$
Recall that
$$
1 = \lim_{w\to\infty^{(2)}} \frac {\psi^{(2)}(w)}{w}= c_2
\lim_{w\to\infty^{(2)}} \frac {1+G^{-1}(w)}{w(1-G^{-1}(w))}= c_2
\lim_{w\to\infty^{(2)}} \frac {2}{w(1-G^{-1}(w))}=  c_2\; \frac
{2H(a)}{A}\;,
$$
and this gives $ c_2= A/2H(a). $ This finishes the proof of
Theorem \ref{tnosimetrico}.

\section{
Proofs of Theorems \ref{tsistema} and \ref{tsimetrico}}
\label{2thms}

\begin{proof}[Proof of Theorem \ref{tsistema}]
We associate with any point $(\la,\mu)\in \RR^2$ the point
$(u,v)$, where \beqn \label{uv} u=\la-\mu, \qquad v=(\la+\mu)^2.
\neqn In terms of $u,v$, {{\bf (Syst)}} may be rewritten as \beqn
\begin{aligned}
\label{alg-syst2} \displaystyle
\begin{cases}
u=-\displaystyle \frac {
2\,(a+\alpha)(3-a\alpha-a-\alpha)(3-a\alpha+a+\alpha)}
{ (a-\alpha)^3 }\,, \\
\phantom{=} \\
v=4 \,\displaystyle \frac{(3+a\alpha)^3
(1-a\alpha)(2+a+\alpha)(2-a-\alpha)} {(a-\alpha)^6}\, .
\end{cases}
\end{aligned}
\neqn One can check that these equations imply that\footnote{We
found these formulas by using the package Maple of symbolic
calculations.}
\begin{align}
\label{exprs:u,v}
v-(2-u)^2= 16\, \frac { (a^2-1)(\al^2+2 a\al-3)^3 }{(a-\al)^6}, \qquad %\\
v-(2+u)^2= 16 \,\frac { (\al^2-1)(a^2+2 a\al-3)^3 }{(a-\al)^6}.
\end{align}
Notice that $\al^2+2 a\al-3< 0 $ for all $(a,\al)\in (-1,1)\times
(-1,1)$. It also follows that $\al^2+2 a\al-3\le0 $ for all
$(a,\al)\in [-1,1]\times [-1,1]$.

Take any point $(a,\al)\in \A$. By \eqref{exprs:u,v}, the
corresponding pair $(u,v)$, given by \eqref{alg-syst2}, satisfies
$v\ge (2-u)^2$, $v\ge (2+u)^2$. Hence $v\ge0$, and $\sqrt{v}\ge
2-u$, $\sqrt{v}\ge 2+u$. Put \beqn \label{uv->lamu} \la=\frac 12\,
(u+\sqrt{v}), \qquad \mu=\frac 12 \,(\sqrt{v}-u); \neqn then
$\la,\mu\ge 1$. By examining \eqref{uv}, one gets that the pair
$(\la, \mu)$ is the unique solution of {{\bf (Syst)}} that belongs
to $\Omega$. By repeating the same arguments with strict
inequalities, one also sees that $\la> 1$, $\mu> 1$ whenever
$(a,\al)\in \inter\A$. The assertions of the Theorem about the
images in $\Omega$ of the sets $\{\al=-1\}$, $\{a=1\}$, and
$\{\al=-1,\; a=1\}$ follow easily from \eqref{exprs:u,v}.

It was proved already in the previous section that for any
$(\la,\mu)\in\inter \Om$, {{\bf (Syst)}} has a unique solution
$(a,\al)\in \inter \A$. This finishes the proof of the Theorem.
\end{proof}

\begin{proof}[Proof of Theorem \ref{tsimetrico}]
Assume that the intervals $[-\mu,-1]$ and $[1,\l]$ are symmetric,
i.e. $\l=\mu$. Then, taking into account that
$(3-a\alpha-a-\alpha)(3-a\alpha+a+\alpha)>0$, we deduce from the
first equation in {\bf (Syst)} that $\alpha=-a$. Hence (\ref{2.y})
gives $h=0$. By (\ref{ecuacion}), it follows that $\b$ and $b$ are
the solutions of the equation
$$
x^2=3-\frac{4a^2}{1+a^2}=\frac{3-a^2}{1+a^2}\;.
$$
Consequently, $\beta=-b$. Hence, by (\ref{2.x}), $\displaystyle
B=A= - \frac{(1-a^2)^2}{2(1+a^2)}\,. $ Therefore,
$$
H(z) = z - \frac{(1-a^2)^2 z}{(1+a^2)(1-z^2)}\,, \qquad H(a)=\frac
{2a^3}{1+a^2}\,,
$$
and
$$
\psi^{(1)}=\frac{1+a^2}{2a^3}\,(G^{-1}+1) \,, \qquad \qquad
\psi^{(2)}= \frac {(1-a^2)^2}{8a^3}\, \frac{G^{-1}+1}{G^{-1}-1}
\;.
$$
Consequently, $H$ is an odd function, and the second equation in
{\bf (Syst)} yields
$$
16 \lambda^2 a^6 =(1+a^2)(3-a^2)^3 =27-18a^4+8a^6-a^8 .
$$

We have obtained that $a$ is solution of
\begin{equation}
\label{simetria} a^8 +(16 \lambda^2-8)a^6 +18a^4 -27=0\,.
\end{equation}
In this simpler case, we can check directly that there is exactly
one solution of (\ref{simetria}) on the interval $(0,1)$. In fact,
the function $u(t)=t^4 +(16 \lambda^2-8)t^3 +18t^2 -27$ verifies
$u(0)=-27<0, u(1)=16 \lambda^2-16>0$, and $u'(t)=4t^3 +3(16
\lambda^2-8)t^2 +36t > 36t >0$ for every $t \in (0,1)$.
Furthermore, since (\ref{simetria}) is a biquartic equation, it
can be solved by radicals.
\end{proof}

\section{Numerical resolution of the nonlinear system.}
\label{sec:num-res}

The purpose of this section is to show that the problem of solving
numerically the algebraic system {{\bf (Syst)}} is a simple task,
both from the theoretical and practical points of view.

Let us use the vector notation: $A=(\alpha,a)\in \inter\A$ and
$L=(\lambda,\mu)\in \inter\Omega$. By Theorem \ref{tsistema},
{{\bf (Syst)}} defines uniquely a map
$$
F:A\mapsto L.
$$
We suggest to use a continuation method for solving this equation
numerically, which is a variation of methods described, for
instance, in \cite{DeufHo}, Section 4.4.2. Notice that these
methods go back to the treatise \cite{Poi} by H. Poincar\'e on
celestial mechanics.

Recall that the Newton method for solving equation $F(A)=\wh L$
with the starting point $A_{st}$ consists in the following (see,
for instance, \cite{DeufHo}, Section 4.2). Fix a small parameter
$\sigma>0$ and define iteratively  $A^{(0)}=A_{st}$;
$A^{(n+1)}=A^{(n)}-F'(A^{(n)})^{-1}F(A^{(n)})$, $n\ge0$. This
iterative process finishes when $\big| F(A^{(n)})-\wh L
\big|<\sigma$, and the last point $A_{fin}:=A^{(n)}$ is taken for
an approximate solution of equation $F(A)=\wh L$.

In the continuation scheme, we apply the Newton algorithm several
times. Given a point $L_*\in\inter\Om$, we wish to find a good
approximation for a solution $A_*\in\inter\A$ of the equation
$F(A_*)=L_*$. The method goes as follows.

\textbf{Part 1.} Choose an initial approximation $A_0$ of the
solution. We do it solving the equation $F(A_0)=L_0$, where
$L_0=(\lambda_*+\mu_*,\lambda_*+\mu_*)/2$ is a symmetric vector,
by applying Theorem \ref{tsimetrico}.

\textbf{Part 2.} Choose a large integer $n>0$, and divide the
interval $[L_0,L_*]\subset \Om$ into $n$ equal subintervals by
division points $L_0, L_1, \dots, L_n=L_*$. So, we set
$L_k=\big((n-k)L_0+kL_*\big)/n$.

\textbf{Part 3.} Let the parameter $\sigma>0$ be fixed. The
calculation is performed in $n$ steps, applying the Newton method
$n$ times.

On the $k$th step ($1\le k\le n$), we find an approx\-im\-ate
solution $A^{(k)}_{fin}$ of the equation $F(A)=L_k$ by running the
above-described Newton method with the starting point
$A^{(k)}_{st}=A^{(k-1)}_{fin}$, which is available from the
$(k-1)$th step. If $k=1$, then we put $A^{(1)}_{st}=A_0$, where
$A_0$ is the value found from Part 1.

Once all $n$ steps of Part 3 are performed successfully, one takes
$A^{(n)}_{fin}$ for an approximation of the solution $A_*$ of the
equation $F(A_*)=L_*$. One has, in fact:
$F(A^{(n)}_{fin})\thickapprox L_n=L_*$.

The applicability of this method for our concrete function $F$ is
justified by the following theorem. We will formulate it in a more
general setting.

\begin{teorema}
\label{t:cont-meth} Let $\wt\A$ and $\wt\Om$ be open subsets of
$\RR^d$, and suppose that $\wt\A$ is connected and $\wt\Om$ is
convex. Let $F$ be a $\mathcal{C}^2$ homeomorphism from $\wt\A$ to
$\wt\Om$ such that $\det F'\ne 0$ in $\wt\A$. Then the above
continuation scheme for the Newton method is numerically feasible
in the following sense.

Let a point $L_*\in \inter\Om$ and a starting point $L_0$ be
given. Let $A_*\in\wt \A$ be the (unique) solution of the equation
$F(A)=L_*$. Then for any $\eps>0$ there exist a $\sigma_0>0$ and
an integer $N_0\ge 1$ such that for any $n>N_0$ and any
$\sigma<\sigma_0$, the Newton method stops on each of $n$ steps of
Part 3 of the algorithm, and the approximate solution
$A^{(n)}_{fin}=A^{(n)}_{fin}(\sigma)$ obtained satisfies
$|A^{(n)}_{fin}-A_*|<\eps$.
\end{teorema}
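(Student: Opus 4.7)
The plan is to reduce the problem to a uniform Newton--Kantorovich analysis on a compact tube around the lifted solution path. Define $\gamma:[0,1]\to\wt\A$ by $\gamma(t)=F^{-1}\big((1-t)L_0+tL_*\big)$. Since $F$ is a $\mathcal C^2$ homeomorphism with non-degenerate Jacobian, $\gamma$ is continuous (in fact $\mathcal C^2$) and joins $F^{-1}(L_0)$ to $A_*$; its image $K$ is a compact subset of $\wt\A$. Fix an open set $U$ with $K\subset U$, $\overline U$ compact and $\overline U\subset\wt\A$, and put
\[ M_1:=\sup_{x\in\overline U}\|F'(x)^{-1}\|,\qquad M_2:=\sup_{x\in\overline U}\|F''(x)\|;\]
both are finite.

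Next I would invoke a standard Newton--Kantorovich estimate on $\overline U$ to produce a single radius $r_0>0$ (depending only on $M_1$, $M_2$ and $\operatorname{dist}(K,\partial U)$) with the following uniform property: for every $t\in[0,1]$ and every starting point $A^{(0)}$ with $|A^{(0)}-\gamma(t)|<r_0$, the Newton iterates for the equation $F(X)=(1-t)L_0+tL_*$ remain in $U$ and converge quadratically to $\gamma(t)$, so that for any tolerance $\sigma>0$ the iteration terminates in finitely many steps and its output $A^{\mathrm{out}}$ satisfies $|A^{\mathrm{out}}-\gamma(t)|\le M_1\sigma$ (by the mean value theorem applied to $F^{-1}$). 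Using uniform continuity of $\gamma$ on $[0,1]$, choose $N_0$ so that $n>N_0$ implies $|\gamma(k/n)-\gamma((k-1)/n)|<r_0/2$ for every $k$, and set $\sigma_0:=\min\big(r_0/(2M_1),\,\eps/M_1\big)$.

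The proof concludes with an induction on $k$ establishing the invariant $|A^{(k)}_{fin}-\gamma(k/n)|\le M_1\sigma<r_0/2$. The base case $k=0$ is immediate from Part~1 of the algorithm, which supplies $A_0$ (essentially) exactly via Theorem~\ref{tsimetrico}. For the inductive step the triangle inequality gives
\[|A^{(k-1)}_{fin}-\gamma(k/n)|\le|A^{(k-1)}_{fin}-\gamma((k-1)/n)|+|\gamma((k-1)/n)-\gamma(k/n)|<r_0/2+r_0/2=r_0,\]
so the Newton iteration at step $k$, started from $A^{(k)}_{st}=A^{(k-1)}_{fin}$, lies within the uniform basin of convergence, terminates and yields the next instance of the invariant. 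Applying the invariant at $k=n$ gives $|A^{(n)}_{fin}-A_*|\le M_1\sigma<\eps$, as required. The main technical obstacle is precisely the extraction of a single convergence radius $r_0$ valid for the entire one-parameter family of Newton problems $F(X)=L(t)$; once this uniformity is secured, the rest of the argument is soft compactness and bookkeeping. Uniformity, however, is exactly what the compactness of $K$ together with the $\mathcal C^2$ regularity and non-singularity of $F'$ on $\overline U$ deliver via classical quantitative Newton theory.
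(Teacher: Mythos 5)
Your proof is correct and follows essentially the same route as the paper's: extract a uniform Newton convergence radius along the compact preimage path $K=F^{-1}([L_0,L_*])$ via the $\mathcal C^2$ regularity and non-degeneracy of $F'$, use (uniform) continuity of $F^{-1}$ to ensure successive division points are within that radius of each other, and run the induction. The paper achieves the uniform basin by citing a quantitative result (Theorem~4.10 in \cite{DeufHo}) and then invoking compactness, while you invoke Newton--Kantorovich with explicit constants $M_1=\sup\|F'^{-1}\|$, $M_2=\sup\|F''\|$ on a compact tube $\overline U$ and derive the a~posteriori error bound $|A^{\rm out}-\gamma(t)|\le M_1\sigma$; this is the same idea with somewhat more explicit bookkeeping.
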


Because of the topological assumption on $F$, the situation is
very simple. The feasibility of the method is closely related with
the continuation property for $F$, see \cite{OrtRh}, Section 5.3.

It seems that this theorem is a variation of classical results,
known to specialists. Since the authors were unable to find an
exact reference, a sketch of the proof is included.

\begin{proof}[Proof of Theorem \ref{t:cont-meth}]
Denote by $B(q,r)$ the open ball centered at a point $q\in\RR^d$
of radius $r$. We apply Theorem 4.10 from \cite{DeufHo}. It gives
sufficient quantitative conditions for convergence of the Newton
method, which imply, in particular, the following. Suppose $\wh
A\in\wt\A$, and let $\wh L=F(\wh A)\in\wt\Om$. Then, by the
assumption, $\det F'(\wh A)\ne 0$. It follows that there exists
$\de>0$ with the following property.

{$\bf (*)$} Given any $\eps>0$, there exists $\sigma_0>0$ such
that to any $\sigma<\sigma_0$ there corresponds an integer
$M(\sigma)$ satisfying the following: the Newton method for
solving equation $F(A)=\wh L$ with any starting point $A_{st}\in
B(\wh A,\de)$ stops after at most $M(\sigma)$ steps, and the
approximate solution $A_{fin}(\sigma)$ obtained fulfils \linebreak
$|A_{fin}(\sigma)-\wh A|<\eps$.

\

Next, the pre-image $F^{-1}([L_0,L_*])$ is compact, hence the
infimum of $|\det F'|$ on this set is positive. It follows that
the above condition (*) takes place uniformly for all pairs of
points $(\wh A, \wh L)\in \wt\A\times\wt\Om$ such that $F(\wh
A)=\wh L$ and $\wh L\in [L_0,L_*]$.

Fix $L_0$, $L_*$, and fix some $\de$ such that (*) holds for all
pairs $(\wh A, \wh L)$ as above. Take any $\eps>0$, and let us
prove that the conclusions of the theorem hold for this $\eps$.
We can assume that $\eps<\de/2$. Let us find the corresponding
$\sigma_0$ such that $(*)$ holds. There exists a positive $\rho$
such that $|A'-A''|<\eps$ whenever $F(A'),F(A'')\in [L_0,L_*]$ and
$|F(A')-F(A'')|<\rho$. Let $N_0$ be any integer such that
$hN_0>|L_*-L_0|$. We claim that the assertion of the Theorem holds
for these $\sigma_0$ and $N_0$.

Indeed, take any $n\ge N_0$, and put (as above)
$L_k=\big((n-k)L_0+kL_*\big)/n$, and $A_k=F^{-a}(L_k)$. Then
$|A_k-A_{k+1}|<\eps$ for all $k$. Consider the properties:
$$
{\bf(\Ga)}_k \quad |A_{st}^{(k)}-A_k|<\de; \qquad \qquad
{\bf(\De)}_k \quad |A_{fin}^{(k)}-A_k|<\eps.
$$
\noindent Since $A_{st}^{(k)}=A_0$, ${\bf(\Ga)}_1$ holds. Next,
for any $k$, $(\Ga)_k$ implies $(\De)_k$, due to $(*)$, and
$(\De)_k$ implies $(\Ga)_{k+1}$, because
$$
|A_{st}^{(k+1)}-A_{k+1}|= |A_{fin}^{(k)}-A_{k+1}| \le
|A_{fin}^{(k)}-A_k| + |A_k-A_{k+1}|<2\eps<\de.
$$
By induction, we obtain that $(\Ga)_k$ and $(\De)_k$ hold for all
$k$. In particular,
$$
|A_{fin}^{(n)}-A_n| = |A_{fin}^{(n)}(\sigma)-A_*|<\eps.
$$
\end{proof}

Let us return to our particular function $F$, defined by system
{{\bf (Syst)}}. It follows from Theorem \ref{tsistema} and
explicit formulas \eqref{alg-syst2}, \eqref{uv->lamu} that
$F:\inter \A\to \inter \Om$ is a $\mathcal{C}^\infty$ smooth
homeomorphism. Put $\wt\A=\inter\A$ and $\wt\Om=\inter \Om$. In
order to prove that the above Theorem \ref{t:cont-meth} applies to
$F$, it only remains to check that $\det F'$ does not vanish in
$\inter \A$. This follows from an explicit calculation, using
formulas \eqref{alg-syst2}, \eqref{uv->lamu}, that represent $F$
as a composition map. One has
$$
\det F'=\det \frac {\p(\la,\mu)}{\p(a,\alpha)} =\det \frac
{\p(\la,\mu)}{\p(u,v)}\cdot \det \frac {\p(u,v)}{\p(a,\alpha)}.
$$
Then, $\det \frac {\p(\la,\mu)}{\p(u,v)}=1/(4v^{1/2})>0$, because
$v=v(a,\alpha)>0$ in $\inter \A$. At last, \eqref{alg-syst2}
implies  that
$$
\det \frac {\p(u,v)}{\p(a,\alpha)} = -2^7\,{(a-\al)^{-10}} {(a^2+2
a\al-3)^2 (3+a\al)^2 (\al^2+2 a\al-3)^2}
$$
(we have applied the Maple package here). As we noted in the proof
of Theorem \ref{tsistema}, $a^2+2 a\al-3<0$ and $\al^2+2 a\al-3<0$
for all $(a,\al)\in\inter\A$. Therefore $\det \frac
{\p(u,v)}{\p(a,\alpha)}<0$ and $\det F'(A)<0$ for all
$A=(a,\al)\in\inter\A$.

In the next table, we reproduce some numerical results of the
implementation of the above method.

\newpage

\begin{table}[h]
\caption{Numerical results}
\begin{center}
\begin{tabular}{|c|c|c|c|c|c|}
\hline
$\la$  &  $\mu$  &  $\beta$ &   $\alpha$ &    $a$  &     $b$ \\
\hline \hline
  1.01 &   1.10 & -1.02433457 & -0.97566543 & 0.99756619 & 1.00243381\\
\hline
  1.01 &   1.50 & -1.11120778 & -0.88879220 & 0.99776894 & 1.00223104\\
\hline
  1.01 &   2.00 & -1.20162987 & -0.79837009 & 0.99796298 & 1.00203698\\
\hline
  1.01 &   5.00 & -1.53512664 & -0.46487320 & 0.99855894 & 1.00144090\\
\hline
  1.01 &  10.00 & -1.80340393 & -0.19659585 & 0.99893546 & 1.00106432\\
\hline
  1.01 &  20.00 & -2.05587223 & 0.05587246 & 0.99922942 & 1.00077035\\
\hline
  1.01 &  50.00 & -2.33815403 & 0.33815423 & 0.99950547 & 1.00049433\\
\hline
  1.01 & 100.00 & -2.50602945 & 0.50602961 & 0.99964857 & 1.00035127\\
\hline
  1.10 &   1.50 & -1.10904414 & -0.89095444 & 0.97812791 & 1.02187067\\
\hline
  1.10 &   2.00 & -1.19803860 & -0.80195744 & 0.97999986 & 1.01999618\\
\hline
  1.10 &   5.00 & -1.52833535 & -0.47164952 & 0.98579364 & 1.01419123\\
\hline
  1.10 &  10.00 & -1.79584373 & -0.20413551 & 0.98948341 & 1.01049582\\
\hline
  1.10 &  20.00 & -2.04866217 & 0.04868414 & 0.99237698 & 1.00760105\\
\hline
  1.10 &  50.00 & -2.33227827 & 0.33229733 & 0.99510249 & 1.00487845\\
\hline
  1.10 & 100.00 & -2.50132594 & 0.50134152 & 0.99651797 & 1.00346645\\
\hline
  1.50 &   2.00 & -1.18411168 & -0.81581475 & 0.90730105 & 1.09262537\\
\hline
  1.50 &   5.00 & -1.50111605 & -0.49858460 & 0.93306652 & 1.06663413\\
\hline
  1.50 &  10.00 & -1.76499059 & -0.23458419 & 0.95001591 & 1.04955887\\
\hline
  1.50 &  20.00 & -2.01887210 & 0.01933234 & 0.96355662 & 1.03598315\\
\hline
  1.50 &  50.00 & -2.30775501 & 0.30816158 & 0.97647774 & 1.02311569\\
\hline
  1.50 & 100.00 & -2.48160429 & 0.48193938 & 0.98324158 & 1.01642333\\
\hline
  2.00 &   5.00 & -1.47250352 & -0.52657019 & 0.87477665 & 1.12429705\\
\hline
  2.00 &  10.00 & -1.73155665 & -0.26707834 & 0.90556526 & 1.09306973\\
\hline
  2.00 &  20.00 & -1.98589261 & -0.01259181 & 0.93068264 & 1.06780179\\
\hline
  2.00 &  50.00 & -2.28012155 & 0.28148839 & 0.95501596 & 1.04361720\\
\hline
  2.00 & 100.00 & -2.45919974 & 0.46033681 & 0.96787312 & 1.03098981\\
\hline
  5.00 &  10.00 & -1.59758021 & -0.39392722 & 0.70769711 & 1.28381032\\
\hline
  5.00 &  20.00 & -1.84539761 & -0.14406679 & 0.77852683 & 1.21093757\\
\hline
  5.00 &  50.00 & -2.15584295 & 0.16630302 & 0.85234419 & 1.13719574\\
\hline
  5.00 & 100.00 & -2.35580726 & 0.36490617 & 0.89323258 & 1.09766851\\
\hline
 10.00 &  20.00 & -1.70949675 & -0.26791189 & 0.61012964 & 1.36727900\\
\hline
 10.00 &  50.00 & -2.02314506 & 0.04811583 & 0.73131282 & 1.24371641\\
\hline
 10.00 & 100.00 & -2.23983688 & 0.26275325 & 0.80256032 & 1.17452331\\
\hline
 20.00 &  50.00 & -1.86312347 & -0.09255784 & 0.56794501 & 1.38773629\\
\hline
 20.00 & 100.00 & -2.09037160 & 0.13418100 & 0.67474116 & 1.28144945\\
\hline
 50.00 & 100.00 & -1.85498816 & -0.06912394 & 0.44519539 & 1.47891671\\
\hline
\end{tabular}
\end{center}
\end{table}

\

\begin{obeylines}
{\sc Guillermo L\'opez, Domingo Pestana, and Jose Manuel Rodr{\'\i}guez
Dept. of Mathematics, Universidad Carlos III de Madrid}
E-mail addresses: {\tt lago@math.uc3m.es, dompes@math.uc3m.es, jomaro@math.uc3m.es}
\end{obeylines}

\

\begin{obeylines}
{\sc Dmitry Yakubovich,
Dept. of Mathematics, Universidad Aut\'{o}noma de Madrid}  and
{\sc Instituto de Ciencias Matem\'{a}ticas (CSIC-UAM-UC3M-UCM)}
E-mail address: {\tt dmitry.yakubovich@uam.es}
\end{obeylines}

\end{document}